\newcommand{\be}{\begin{equation}}
\newcommand{\ee}{\end{equation}}
\newcommand{\beq}{\begin{eqnarray}}
\newcommand{\eeq}{\end{eqnarray}}
\newtheorem{prop}{Proposition}[section]
\newtheorem{thm}[prop]{Theorem}
\newtheorem{lem}[prop]{Lemma}
\newtheorem{defi}[prop]{Definition}
\newtheorem{con}[prop]{Conjecture}
\def\begeq{\begin{equation}}
\def\endeq{\end{equation}}
\def\p{\partial}
\begin{document}
\title {Volume comparison of conformally compact manifolds with scalar curvature $R\geq -n\left(n-1\right)$ }

\begin{abstract}
In this paper, we use the normalized Ricci-DeTurk flow to prove a stability result for strictly stable conformally compact Einstein manifolds. As an application, we show a local volume comparison of conformally compact manifolds with scalar curvature $R\geq -n\left(n-1\right)$ and also the rigidity result when certain renormalized volume is zero.\\

Dans cet article, nous utilisons le  flot de Ricci-DeTurk normalis\'e  pour prouver la stabilit\'e des vari\'et\'es d'Einstein strictement stables et  conform\'ement compactes. En tant qu'application, nous montrons une comparaison de volume local pour les vari\'et\'es conform\'ement compactes dont la courbure scalaire satisfait $R \geq -n(n-1)$. Nous donnons \'egalement un r\'esultat de rigidit\'e lorsque certain volume renormalis\'e est nul.
\end{abstract}

\keywords{conformally compact manifold, strictly stable Einstein manifold, normalized Ricci-Deturk flow, renormalized volume, volume comparison}
\renewcommand{\subjclassname}{\textup{2000} Mathematics Subject Classification}
 \subjclass[2000]{Primary 53C25; Secondary 58J05}

\author{Xue Hu $^\dag$,  Dandan Ji $^\ddag$
and  Yuguang Shi$^\ddag$}

\address{Xue Hu, Academy of Mathematics and Systems Science, Chinese Academy of Sciences,
Beijing, 100190, P.R. China.} \email{huxue@amss.ac.cn}

\address{Dandan Ji, Key Laboratory of Pure and Applied mathematics, School of Mathematics Science, Peking University,
Beijing, 100871, P.R. China.} \email{jidandan@pku.edu.cn}

\address{Yuguang Shi, Key Laboratory of Pure and Applied mathematics, School of Mathematics Science, Peking University,
Beijing, 100871, P.R. China.} \email{ygshi@math.pku.edu.cn}

\thanks{$^\dag$ Research partially supported by China Postdoctoral Science Foundation grant 2012M520414.}
\thanks{$^\ddag$ Research partially supported by NSF grant of China 10990013.}

\date{May, 2014}
\maketitle

\markboth{}{}
\section{Introduction}
Volume is one of the natural geometric quantities which is often used to explore geometrical and topological properties of a Riemannian manifold. Classical examples in this direction are various volume comparison theorems which turned out to be fruitful in Riemannian geometry. In order to use those volume comparison theorems efficiently, we have to assume certain lower bound on the Ricci curvature of the manifold. Obviously, we can not expect the same results still to be true if we only assume lower bound on scalar curvature. However, in \cite{S}, R.Schoen proposed the following conjecture on the volume functional $vol(\cdot)$ on a closed hyperbolic manifold.
\begin{con}
Let $(M^n,\tilde{g})$ be a closed hyperbolic manifold. Let $g$ be another metric on $M$ with scalar curvature $R(g)\geq R(\tilde{g})$, then
$vol(g)\geq vol(\tilde{g})$.
\end{con}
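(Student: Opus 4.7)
The plan is to attack the conjecture by flowing $g$ toward $\tilde g$ using the \emph{hyperbolic-normalized Ricci flow}
$$\partial_t g_t = -2\,\mathrm{Ric}(g_t) - 2(n-1)\, g_t, \qquad g_0 = g,$$
implemented as a Ricci--DeTurck flow with background $\tilde g$ in the spirit of the present paper. Hyperbolic metrics are fixed points of this flow, and the volume element evolves as $\partial_t\, dv_{g_t} = -(R(g_t)+n(n-1))\,dv_{g_t}$, so the total volume is non-increasing precisely when $R(g_t) \geq -n(n-1)$ is preserved along the flow.

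The first step is to show preservation of the scalar curvature lower bound. Setting $u := R + n(n-1)$ and using the standard identity $\partial_t R = \Delta R + 2|\mathrm{Ric}|^2 + 2(n-1)R$ together with $|\mathrm{Ric}|^2 \geq R^2/n$, a direct computation yields
$$\partial_t u \geq \Delta u + \tfrac{2}{n}\,u\,(u - n(n-1)),$$
whose reaction term is non-negative whenever $u \leq 0$, so a weak maximum principle preserves $u \geq 0$. Integrating $\partial_t\,\mathrm{vol}(g_t) \leq 0$ then gives $\mathrm{vol}(g) \geq \mathrm{vol}(g_t)$ for every $t$ in the maximal interval of existence.

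The main obstacle, and the reason the conjecture is open, is long-time existence and convergence $g_t \to \tilde g$ starting from an \emph{arbitrary} initial metric $g$. Granted this, the volume inequality follows immediately in the limit. The paper's stability theorem delivers exactly such convergence within a $C^{2,\alpha}$-neighborhood of $\tilde g$, using strict stability of the hyperbolic Einstein metric (a positive spectral gap for the Lichnerowicz Laplacian on transverse-traceless tensors), and therefore settles the conjecture \emph{locally}. Closing the global gap is the hard part; the natural lines of attack I would pursue are (i) pairing the above volume monotonicity with a Perelman-type $\bar\lambda$-monotonicity to rule out finite-time singularities and to extract a subsequential Einstein limit, (ii) first conformally normalizing $g$ to constant scalar curvature $-n(n-1)$ via the Yamabe equation in $[g]$ and then exploiting the identification $\sigma(M) = -n(n-1)\,\mathrm{vol}(\tilde g)^{2/n}$ (known for $n=3$ by Perelman, and obtainable via Besson--Courtois--Gallot entropy estimates in higher dimensions with locally symmetric targets), or (iii) in dimension $3$, invoking directly Perelman's minimal-volume/simplicial-volume identification to bypass the flow entirely.

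In the rigidity case $\mathrm{vol}(g) = \mathrm{vol}(\tilde g)$, the volume must be constant along the flow, forcing $R(g_t)\equiv -n(n-1)$ for all $t$; the equality case of $|\mathrm{Ric}|^2 \geq R^2/n$ then yields $\mathrm{Ric}(g_t) \equiv -(n-1)g_t$, so $g$ itself is Einstein with the hyperbolic constant, and Mostow rigidity identifies it with $\tilde g$ up to diffeomorphism.
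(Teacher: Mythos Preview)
The statement is Schoen's conjecture, which the paper presents as an \emph{open} problem and does not prove; it merely notes that the $n=3$ case follows from Perelman's work and uses the conjecture as motivation for its own results on conformally compact manifolds. There is therefore no proof in the paper to compare your attempt against.

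Your proposal is, by your own admission, not a proof either: the global long-time existence and convergence of the normalized Ricci flow from an \emph{arbitrary} initial metric is exactly the missing ingredient, and the three lines of attack you list at the end are not developed. What you have correctly written down are the two unconditional pieces---preservation of $R\geq -n(n-1)$ via the maximum principle, and the resulting volume monotonicity---and these are precisely the mechanisms the paper uses to prove its actual theorems in the conformally compact setting, where the smallness hypothesis on $g-\tilde g$ supplies the missing convergence via the stability theorem. Your rigidity sketch also parallels the paper's argument: constant volume forces $R\equiv -n(n-1)$, and feeding this back into the scalar-curvature evolution forces $\mathrm{Ric}=-(n-1)g$. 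One minor simplification worth noting: the paper writes the evolution as $\partial_t S=\Delta S+2|\mathrm{Ric}+(n-1)g|^2-2(n-1)S$ with $S=R+n(n-1)$, from which $S\geq 0$ follows immediately by the maximum principle without the Cauchy--Schwarz step $|\mathrm{Ric}|^2\geq R^2/n$.
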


This conjecture remained widely open until its $3$-dimensional case followed as a corollary of Perelman's work on geometrization (\cite{P1,P2}). Later, Agol, Storm and Thurston established similar results on compact hyperbolic $3$-manifolds whose boundary are minimal surfaces (\cite{AST}). In \cite{MT}, Miao and Tam found that the above conjecture on closed manifolds does not generalize directly to manifolds with boundary if
only the Dirichlet boundary condition is imposed. Precisely, they gave a negative answer on the geodesic balls in $3$-dimensional hyperbolic space by using the variation of the volume with scalar curvature constraints. It is natural to ask if there
exists a similar conjecture or result on noncompact manifolds with hyperbolic metric, more generally, a kind of conformally compact manifold. However, what needs to be generalized first is the concept of an appropriate "volume". Recently, inspired by Bray's thesis \cite{Br}, Brendle and Chodosh addressed a notion of renormalized volume for some asymptotically Anti-deSitter-Schwarzschild manifolds in \cite{BC} and established an interesting volume comparison result with a lower bound on the scalar curvature. Their result only works for $3$-dimensional asymptotically Anti-deSitter-Schwarzschild manifolds because they need to use Geroch monotonicity formula for the Hawking mass along the IMCF where the issue of Gauss-Bonnet formula played an important role. With these results in mind, it may be natural to ask whether or not a version of Schoen's Conjecture in terms of this renormalized volume introduced in \cite{BC} is true for arbitrary dimensional conformally compact manifolds. As the first step, we will give an affirmative answer to this when the manifold is a slight perturbation of hyperbolic space or other strictly stable conformal compact Einstein manifold. We point out that in \cite{BMW}, a similar notion of renormalized volume was introduced and an interesting monotonicity of it was established under Ricci flow.

In order to state our main results, let us recall some basic definitions. Suppose that $\bar{M}^{n}$ is a smooth manifold with the boundary $\partial M$ and the interior $M^n$. A defining function $x$ of the boundary in
$M^{n}$ is a smooth function on $\bar{M}^{n}$ such that $x> 0$ in $M^{n}$; $x= 0$ on $\p M$; $dx \neq 0$ on $\p M$. A complete noncompact Riemannian metric $g$ on $M$ is said to be
conformally compact of regularity $C^{k, \mu}$ if $x^2g$ extends to
be a $C^{k, \mu}$  Riemannian metric on $\bar M$. The metric $x^2g$ induces a metric $\hat{g}$ on the boundary $\p M,$ and the metric
$g$ induces a conformal class of metric $[\hat{g}]$ on the boundary $\p M$ when defining functions
vary. The conformal manifold $(\p M, [\hat{g}])$ is called the conformal infinity of the conformally
compact manifold $(M,g)$.

Let $\tilde{g}$ be an arbitrary conformally compact metric and $g$ be a complete noncompact Riemannian metric on $M^n.$ As the same in \cite{BC}, we define the renormalized volume of $g$ with respect to $\tilde{g}$ by
$$\mathcal{V}_{\tilde{g}}(g):=\lim_{i\rightarrow \infty}\left(vol\left(\Omega_i, g\right)-vol\left(\Omega_i, \tilde{g}\right)\right)$$
where $\Omega_i$ is an arbitrary exhaustion of $M$ by compact sets. It is easy to see that if $g$ has sufficient decay relative to $\tilde{g},$ say, $\|e^{\tau\rho}\left(g-\tilde{g}\right)\|_{C^0\left(M,\tilde{g}\right)}=O\left(1\right),$ where $\tau>n-1$ and $\rho$ is the distance function to some fixed point or set in $M$ with respect to $\tilde{g},$ then $\mathcal{V}_{\tilde{g}}(g)$ is well-defined. In this paper, we are interested in the case that the ground state metric $\tilde{g}$ is a $C^{2,\alpha}$ strictly stable conformally compact Einstein metric. Please see Definition \ref{strictlystable}.

However, in order to avoid the complexity of the end structure of conformally compact manifold, we always need the concept of an essential set. Please see Definition 1.1 in \cite{HQS} for an essential set. In \cite{G} Lemma 2.5.11 and Corollary 2.5.12, Gicquaud proved that if a complete noncompact manifold is $C^2$ conformally compact then it contains essential sets. Once a conformally compact manifold $\left(M^n,g\right)$ has an essential set $\mathbb{D}$, the volume $vol(B_{g}(p,1))$ has a lower bound $\Lambda$$=\Lambda\left(g,n\right)$ for all $p\in M$ where $B_{g}(p,1)\subset M$ is a geodesic ball of radius $1$ at center $p.$ It follows immediately from the well known result (see Lemma 3.1 in \cite{QSW}) that there exists some $i=i\left(\Lambda,k,n\right)>0$ such that the injectivity radius of $(M,g)$ satisfies $inj_{\left(M,g\right)}\geq i$ provided $\|Rm\|_{C^0\left(M^n,g\right)}\leq k.$

Let $\left(M^n,g\right)$ be a $C^{2,\alpha}$ conformally compact Einstein manifold and $L=\Delta_{L}+2\left(n-1\right)$ denote the linearization of the curvature operator $Ric+\left(n-1\right)g$ where $\Delta_{L}$ is the Lichnerowicz Laplacian with respect to the metric $g$ on symmetric $2$-tensor $u,$ i.e.,
\begin{equation}
\Delta_{L}u_{ij}=-\Delta u_{ij}-2R_{ipjq}u^{pq}+R_{iq}u_{j}^{q}+R_{jq}u_{i}^{q}.\nonumber
\end{equation}
It follows from work of Delay (\cite{D}) and Lee (\cite{Lee} Proposition D) that the essential $L^2$ spectrum of $\Delta_{L}$ is $[\frac{\left(n-1\right)^2}{4}-2\left(n-1\right),\infty).$ Hence it is an issue about the discrete eigenvalues below the continuous spectrum of $L$ which has a strictly positive bottom.

\begin{defi}\label{strictlystable}
A $C^{2,\alpha}$ conformally compact Einstein manifold $\left(M^n, g\right)$ is called strictly stable if
\begin{equation}
\lambda=\inf_{u}\frac{\int_{M}\langle Lu,u\rangle_{g} d\mu_{g}}{\int_{M} |u|^2_{g}d\mu_{g}}>0\nonumber
\end{equation}
where the infimum is taken among all nonzero symmetric $2$-tensors $u$ such that
\begin{equation}
\int_{M}\left(|u|^2_{g}+|\nabla u|^2_{g}\right)d\mu_{g}<\infty\nonumber
\end{equation}
\end{defi}
It is known that many conformally compact Einstein manifolds are strictly stable, for example, any $C^{2,\alpha}$ conformally compact Einstein manifold either with nonpositive sectional curvature or with sectional curvature having certain upper bound and nonnegative Yamabe invariant of its conformal infinity (\cite{Lee} Theorem A). In particular, the hyperbolic space is a strictly stable conformally compact Einstein manifold.

Our first main result is an investigation of the stability of a strictly stable conformally compact Einstein manifold under normalized Ricci-DeTurck flow which will be used as a tool to prove the volume comparison result. First, we introduce normalized Ricci flow (NRF for short in the sequel),
\begin{equation}\label{Ricciflow}
\left\{
  \begin{array}{ll}
    \frac{\p}{\p t}g_{ij}=-2(R_{ij}+(n-1)g_{ij}) ~~on~~M^n\times(0,T),\\
    g\left(\cdot,0\right)=g.
  \end{array}
\right.
\end{equation}

Since NRF equation is degenerate parabolic equation, we consider the normalized Ricci-DeTurck flow (NRDF for short in the sequel) with background metric $\tilde{g}$:
\begin{equation}\label{RicciDeTurckflow}
\left\{
  \begin{array}{ll}
    \frac{\p}{\p t}g_{ij}=-2(R_{ij}+(n-1)g_{ij})+\nabla_{i}V_j+\nabla_{j}V_i ~~\text{on}~~M^n\times(0,T),\\
    g\left(\cdot,0\right)=g.
  \end{array}
\right.
\end{equation}
where $V_{j}=g_{jk}g^{pq}\left(\Gamma_{pq}^{k}-\tilde{\Gamma}_{pq}^{k}\right)$ and $\tilde{\Gamma}$ are the Christoffel symbols of the metric $\tilde{g}.$

Assume that $\Phi_t:M^n\longrightarrow M^n$ solves
\begin{equation}\label{diffeomorphism}
\left\{
  \begin{array}{ll}
    \frac{\p}{\p t}\Phi_t\left(x\right)=-V\left(\Phi_t\left(x\right),t\right),\\
    \Phi_0\left(x\right)=id\left(x\right),
  \end{array}
\right.
\end{equation}
where the components of $V$ are given by $V^{i}:=g^{ij}V_j$, then we obtain a family of smooth diffeomorphisms $\Phi_t$ for $t>0$ such that if $g(t)$, $t\in[0,T)$ is a solution to NRDF (\ref{RicciDeTurckflow}), $\bar{g}(t):=\Phi^{\ast}_{t}g(t)$, $t\in[0,T)$ is a solution to NRF (\ref{Ricciflow}).

There are several papers which investigated the stability of hyperbolic space under NRF \cite{LY,SSS,Ba,Bam}. In \cite{SSS}, Schn$\ddot{u}$rer, Schulze and Simon used the NRDF to get the stability of hyperbolic space. To be precise, under the assumptions that $\|g-h\|_{C^0\left(\mathbb{H}^n,h\right)}\leq \epsilon$ and $\|g-h\|_{L^2\left(\mathbb{H}^n,h\right)}\leq K,$ the NRDF starting from $g$ with background metric being hyperbolic metric $h$ exists globally. Moreover, there exists a constant $C=C\left(n,K\right)>0$ such that $$\|g(t)-h\|_{C^0\left(\mathbb{H}^n,h\right)}\leq C \cdot e^{-\frac{1}{4\left(n+2\right)}t}$$ for all $t\in[0,\infty),$ together with some interior estimates and interpolation, which implies that NRDF converges to $h$ exponentially in $C^k$ as $t\rightarrow\infty$ for all $k\in\mathbb{N}.$ In \cite{Ba}, Bahuaud proved that given a smoothly conformally compact asymptotically hyperbolic metric there is a short-time solution to the Ricci flow that remains smoothly conformally compact and asymptotically hyperbolic. After adapting the work of Schn$\ddot{u}$rer, Schulze and Simon, Bahuaud has used some $\eta-$admissible Einstein metric to replace the hyperbolic space in \cite{SSS} and obtained the stability of this $\eta-$admissible Einstein metric under NRF.

In this paper we generalize the above work to the stability of certain strictly stable conformally compact Einstein manifold.

\begin{thm}\label{stability}
Suppose that $\left(M^n, \tilde{g}\right)$ is a $C^{2,\alpha}$ strictly stable conformally compact Einstein manifold and $g$ is another complete noncompact Riemannian metric on $M^n.$ For all $n\geq3$ and $K>0,$ there exists some $\epsilon=\epsilon\left(n,K,\tilde{g}\right)>0$ such that if $$\|g-\tilde{g}\|_{C^0\left(M^n,\tilde{g}\right)}\leq \epsilon~~~~\text{and}~~~~\|g-\tilde{g}\|_{L^2\left(M^n,\tilde{g}\right)}\leq K,$$ then the normalized Ricci-DeTurck flow (\ref{RicciDeTurckflow}) with background metric $\tilde{g}$ starting from $g$ exists globally and converges exponentially to $\tilde{g}.$
\end{thm}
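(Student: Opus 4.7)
The plan is to adapt the Schn\"urer-Schulze-Simon / Bahuaud scheme to a general strictly stable background $\tilde g$. Set $h(t):=g(t)-\tilde g$ where $g(t)$ solves \eqref{RicciDeTurckflow} with background $\tilde g$. Since $\tilde g$ is $C^{2,\alpha}$ conformally compact, Gicquaud's theorem supplies an essential set, the uniform lower volume bound $\Lambda$ together with the bounded curvature of $\tilde g$ yields a uniform injectivity radius bound by the Cheeger-Gromov type result recalled just before Definition \ref{strictlystable}, and hence $(M,\tilde g)$ has bounded geometry. With initial data satisfying $\|h(0)\|_{C^0}\le\epsilon$, standard quasilinear parabolic theory produces a smooth short-time solution $g(t)$ to NRDF on a maximal interval $[0,T)$ along which the $C^0$-smallness is preserved; on that interval, Shi-type interior estimates for the DeTurck-type system give, for each $k\ge 1$ and each $\tau>0$,
\[
\sup_{M}|\tilde\nabla^k h(t)|_{\tilde g} \le C(k,\tau,n,\tilde g)\,\|h\|_{C^0([\tau,T)\times M)}\quad\text{for }t\in[\tau,T).
\]

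The heart of the argument is an $L^2$ energy estimate driven by strict stability. Linearizing the right-hand side of \eqref{RicciDeTurckflow} at $\tilde g$ produces precisely $-Lh$ with $L=\Delta_L+2(n-1)$, so
\[
\partial_t h_{ij} = -L h_{ij} + Q_{ij}\bigl(h,\tilde\nabla h,\tilde\nabla^2 h\bigr),
\]
where $Q$ is at least quadratic in its arguments. Pairing with $h$, integrating by parts against $d\mu_{\tilde g}$ (justified by the $L^2$ hypothesis and the Shi bounds), and invoking the strict-stability inequality $\int_M\langle Lh,h\rangle_{\tilde g}\,d\mu_{\tilde g}\ge\lambda\int_M|h|^2_{\tilde g}\,d\mu_{\tilde g}$, I obtain
\[
\tfrac12\tfrac{d}{dt}\int_M|h|^2_{\tilde g}\,d\mu_{\tilde g} \;\le\; -\lambda\int_M|h|^2_{\tilde g}\,d\mu_{\tilde g} + \mathcal E(t).
\]
After one integration by parts the nonlinear error satisfies $|\mathcal E(t)|\le C\|h\|_{C^0}\bigl(\|h\|_{L^2}^2+\|\tilde\nabla h\|_{L^2}^2\bigr)$, and a Bochner-type identity using the bounded curvature of $\tilde g$ gives $\|\tilde\nabla h\|_{L^2}^2\le C\bigl(\langle Lh,h\rangle_{L^2}+\|h\|_{L^2}^2\bigr)$; choosing $\epsilon$ small absorbs $\mathcal E(t)$ into a fraction of the coercive term, and I deduce $\|h(t)\|_{L^2}^2\le K^2 e^{-\lambda t}$.

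To promote $L^2$ exponential decay to $C^0$ exponential decay, observe that $u:=|h|^2_{\tilde g}$ satisfies a parabolic inequality $\partial_t u\le\Delta_{\tilde g} u + Cu$ on unit balls; the bounded geometry of $\tilde g$ supplies uniform local Sobolev inequalities, so parabolic Moser iteration on unit cylinders yields
\[
\sup_{B_{\tilde g}(x,1)\times[t+\tfrac12,t+1]} u \;\le\; C\int_t^{t+1}\!\!\int_{B_{\tilde g}(x,2)} u\,d\mu_{\tilde g}\,ds.
\]
Hence $\|h(t)\|_{C^0}\le Ce^{-\delta t}$ for some $\delta=\delta(n,\tilde g,\lambda)>0$; exponential decay in every $C^k$ then follows from Shi estimates, and the persisting $C^0$-smallness precludes finite-time blow-up, giving $T=\infty$. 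The principal obstacle is the middle step: the quadratic remainder $\mathcal E(t)$ couples $h$, $\tilde\nabla h$, and $\tilde\nabla^2 h$, so closing the energy estimate requires simultaneously the $C^0$-smallness, the Bochner control of $\|\tilde\nabla h\|_{L^2}$ by $\langle Lh,h\rangle_{L^2}$, and the coercivity furnished by strict stability. It is here that the hypothesis $\lambda>0$ enters decisively, and where earlier proofs for hyperbolic space or $\eta$-admissible metrics leaned on very specific features of the model background that must now be replaced by generic properties of a strictly stable $\tilde g$.
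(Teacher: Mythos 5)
Your overall scheme is the one the paper follows: short-time existence with Shi's estimates preserving $C^0$-smallness, an $L^2$ energy estimate driven by strict stability to get exponential decay of $\|g(t)-\tilde g\|_{L^2}$, promotion to $C^0$ decay by local parabolic estimates, and then higher-order decay and long-time existence (including the check, which you correctly anticipate via bounded geometry, that the injectivity radius stays controlled so the interior estimates apply). Your device of trading a fraction of the coercive term $\int\langle Lh,h\rangle$ for $\|\tilde\nabla h\|_{L^2}^2$ in order to absorb the gradient part of the quadratic error is also exactly what the paper does (it splits $-2\int\langle\tilde L(\eta u),\eta u\rangle$ into a $(2-a)$-piece for coercivity and an $a$-piece producing $-a\int|\tilde\nabla(\eta u)|^2$).

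The genuine gap is in the justification of the energy estimate itself. You pair the equation with $h$ and integrate by parts over all of $M$, invoking the strict-stability inequality for $h(t)$ directly, and you assert this is ``justified by the $L^2$ hypothesis and the Shi bounds.'' It is not: the hypothesis gives $\|h(0)\|_{L^2}\le K$ only at $t=0$ and only for $h$, not for $\tilde\nabla h$, and since a conformally compact manifold has infinite volume the pointwise Shi bounds $|\tilde\nabla^k h|\le C/t^{k/2}$ give no $L^2$ control of $\tilde\nabla h(t)$ whatsoever. Thus neither the membership of $h(t)$ in the class where Definition \ref{strictlystable} applies (which requires $\int_M(|u|^2+|\tilde\nabla u|^2)\,d\mu_{\tilde g}<\infty$) nor the vanishing of the boundary terms at infinity in your integrations by parts is established. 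The paper's Lemma \ref{ltwotimedecay} is devoted precisely to this point: it introduces the exponential weight $\eta=e^{-bs}$ with $b>n-1$ (so that $\eta$ is square-integrable against the $e^{(n-1)s}$ volume growth and $\eta u$, $\tilde\nabla(\eta u)$ are honestly in $L^2$ by the $C^0$ bounds), applies strict stability to $\eta u$, first derives a crude bound $\int_0^t\int_M|\tilde\nabla u|^2\,d\mu\,ds<\infty$ from the same differential inequality, and only then shows that the commutator terms involving $\tilde\Delta\eta$ and $\tilde\nabla\eta$ (supported in $M\setminus\Omega$) vanish as the essential set $\Omega$ exhausts $M$, yielding the clean decay $\|u(t)\|_{L^2}\le e^{-\kappa t}K$ with $\kappa$ close to, but not equal to, $\lambda$. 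You need some such localization-and-exhaustion argument (or an a priori proof that $\tilde\nabla h(t)\in L^2$ for $t>0$) before your central inequality $\tfrac12\tfrac{d}{dt}\|h\|_{L^2}^2\le-\lambda\|h\|_{L^2}^2+\mathcal E(t)$ is legitimate; as written, this step does not close.
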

We will use this result as a tool to investigate the behavior of the renormalized volume. We are able to show:
\begin{thm}\label{main1}
Suppose that $\left(M^n, \tilde{g}\right)$ is a $C^{2,\alpha}$ strictly stable conformally compact Einstein manifold and $g$ is another complete noncompact Riemannian metric on $M^n.$ For all $n\geq4$ and $\tau>n-1,$ there exists some $\epsilon=\epsilon\left(n,\tilde{g}\right)>0$ such that if
$$\|e^{\tau\rho}\left(g-\tilde{g}\right)\|_{C^1\left(M^n,\tilde{g}\right)}\leq \epsilon,$$
and
$$R\left(g\right)\geq-n\left(n-1\right),$$
where $\rho=d_{\tilde{g}}(\cdot,\mathbb{D})$ is the distance function to some essential set $\mathbb{D}\subset M$ with respect to $\tilde{g},$ then
\begin{equation}
\mathcal{V}_{\tilde{g}}(g)\geq0.\nonumber
\end{equation}
\end{thm}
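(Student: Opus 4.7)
The idea is to use Theorem \ref{stability} as a deformation tool: we flow $g$ via NRDF (\ref{RicciDeTurckflow}) and show that the renormalized volume $\mathcal{V}_{\tilde g}$ is monotone non-increasing along the evolution, then use the fact that the flow converges to $\tilde g$, whose renormalized volume relative to itself is zero.

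The pointwise decay $|g-\tilde g|_{\tilde g}\le\epsilon e^{-\tau\rho}$ with $\tau>n-1$, combined with the fact that the volume growth of $\tilde g$-geodesic balls on a conformally compact manifold is at most $e^{(n-1)\rho}$, gives $\|g-\tilde g\|_{L^2(M,\tilde g)}\le C(\tilde g)\epsilon$. Thus for $\epsilon$ small, Theorem \ref{stability} yields a global NRDF solution $g(t)$ with $g(0)=g$ converging exponentially to $\tilde g$ in $C^0(M,\tilde g)$. Setting $\bar g(t):=\Phi_t^*g(t)$, a solution of NRF, a direct computation shows that $u:=R(\bar g(t))+n(n-1)$ satisfies
\[
\partial_t u=\Delta u + 2\,|R_{ij}+(n-1)\bar g_{ij}|^2 - 2(n-1)u.
\]
Since $u(0)\ge0$ by hypothesis and $u(t)\to 0$ at spatial infinity (inherited from $\bar g(t)\to\tilde g$), the maximum principle on noncompact manifolds yields $u\ge0$ for all $t\ge 0$, and by pullback this persists for $g(t)$ as well.

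For the monotonicity, the volume form along NRDF evolves by $\partial_t\,d\mu_{g(t)}=[-u(t)+\mathrm{div}(V)]\,d\mu_{g(t)}$. Taking an exhaustion $\Omega_i$ by $\tilde g$-geodesic balls of radii $\rho_i\to\infty$ and applying Stokes' theorem,
\[
\frac{d}{dt}\bigl(\mathrm{vol}(\Omega_i,g(t))-\mathrm{vol}(\Omega_i,\tilde g)\bigr)=-\int_{\Omega_i}u\,d\mu_{g(t)}+\int_{\partial\Omega_i}\langle V,\nu\rangle\,dS.
\]
The weighted $C^1$ bound gives $|V|=O(e^{-\tau\rho})$ while $\mathrm{Area}(\partial\Omega_i)=O(e^{(n-1)\rho_i})$; since $\tau>n-1$ the boundary term vanishes as $i\to\infty$, provided the weighted decay is propagated along the flow. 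Passing to the limit and integrating in time,
\[
\mathcal{V}_{\tilde g}(g) - \mathcal{V}_{\tilde g}(g(T)) = \int_0^T\int_M u(t)\,d\mu_{g(t)}\,dt\ge 0,
\]
and the exponential $C^0$ convergence of $g(t)$ together with a uniform weighted decay gives $\mathcal{V}_{\tilde g}(g(t))\to 0$ as $t\to\infty$, yielding $\mathcal{V}_{\tilde g}(g)\ge 0$.

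The main technical obstacle is propagating the weighted decay $|g(t)-\tilde g|_{\tilde g}=O(e^{-\tau\rho})$ for all positive $t$: it is needed so that $\mathcal{V}_{\tilde g}(g(t))$ remains finite, so that the boundary integral above vanishes in the exhaustion limit, so that the maximum principle on $u$ is applicable at infinity, and so that $\mathcal{V}_{\tilde g}(g(t))\to 0$. I plan to achieve this through weighted parabolic Schauder estimates in the spirit of the works of Bahuaud and of Schn\"urer-Schulze-Simon, interpolating between the quantitative $C^0$ decay produced by Theorem \ref{stability} and the initial weighted $C^1$ hypothesis, and exploiting the gap $\tau>n-1$ relative to the essential spectrum of the linearized operator.
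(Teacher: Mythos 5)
Your overall architecture is exactly the paper's: run NRDF from $g$ via Theorem \ref{stability}, preserve $R\ge -n(n-1)$ by the maximum principle applied to $\bar S=\bar R+n(n-1)$ under the pulled-back NRF, derive the variation formula for $\mathcal V_{\tilde g}$ with the gauge flux $\int_{\partial\Omega}\langle V,\nu\rangle\,d\sigma$ as boundary term, and let $t\to\infty$. The gap is in the step you yourself single out as the main obstacle: you propose to propagate the full weighted decay $|g(t)-\tilde g|=O(e^{-\tau\rho})$, $\tau>n-1$, for all $t$. This is not achievable uniformly in time by the maximum-principle or weighted Schauder methods you cite. The admissible spatial rates for a perturbation that stays bounded (let alone decays) in time are constrained by the indicial roots of the linearized operator, and the paper's Lemma \ref{decayingestimate} only obtains, for $t\ge1$, the rate $e^{-\sigma_4 t}e^{-\gamma\rho}$ with $\gamma$ confined to the interval $\bigl(\tfrac{n-1}{2}-\sqrt{\tfrac{(n-1)^2}{4}-2},\ \tfrac{n-1}{2}+\sqrt{\tfrac{(n-1)^2}{4}-2}\bigr)$, whose upper endpoint is strictly below $n-1$ and which is nonempty only when $n\ge4$ --- this is precisely where the dimension hypothesis enters, a point your proposal never accounts for. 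Consequently, for large $t$ the pointwise bound on $g(t)-\tilde g$ alone decays too slowly in $\rho$ to make your boundary term vanish, to keep $\mathcal V_{\tilde g}(g(t))$ finite, or to force $\mathcal V_{\tilde g}(g(t))\to0$.

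The missing idea is that one should not propagate the strong rate for $u=g(t)-\tilde g$ at all, but rather for the two quantities that actually enter the variation formula. Both $V$ and $\bar S=\bar R+n(n-1)$ satisfy their own parabolic equations whose zeroth-order coefficients are strictly negative (roughly $-(n-1)$ for the $1$-form $V$ after the contracted Bianchi identity, and $-2(n-1)$ for $\bar S$), so these scalar/one-form operators admit decay rates $\delta>n-1$; their source terms are quadratic in $u$ and its derivatives and hence decay like $e^{-2\gamma\rho}$ with $2\gamma=\delta>n-1$; and their data at $t=1$ inherit the rate $\delta$ from the short-time estimate. This yields $|V|\le \epsilon' e^{-\tilde\sigma t}e^{-\delta\rho}$ and $|R+n(n-1)|\le \epsilon'' e^{-\bar\sigma t}e^{-\delta\rho}$ (Lemmas \ref{decayofV} and \ref{decayofscalarcurvature}), which is exactly what is needed to kill the flux in the exhaustion, to justify the integral identity of Proposition \ref{monotonicityvolume}, and to prove $\mathcal V_{\tilde g}(g(t))\to0$ by splitting $M$ into a large compact set (handled by $C^0$ convergence) and its complement (handled by the $e^{-\delta\rho}$ tails). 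With this two-tier decay scheme substituted for your uniform $e^{-\tau\rho}$ propagation, your argument closes; as written, the key technical step would fail.
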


\begin{thm}\label{rigidity}
Under the assumptions of Theorem \ref{main1}, suppose that $\mathcal{V}_{\tilde{g}}(g)=0.$ Then there exists a $C^{\infty}$ diffeomorphism $\Phi:M\longrightarrow M,$ such that $g=\Phi^{\ast}\tilde{g}.$ Moreover, $\Phi$ extends continuously to some diffeomorphism on $\bar{M}$ and $\Phi|_{\p M}=id.$
\end{thm}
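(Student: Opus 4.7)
The plan is to run the normalized Ricci-DeTurck flow from $g$ with background $\tilde g$ and to combine the exponential stability of Theorem \ref{stability} with the volume-monotonicity of the conjugate normalized Ricci flow. After shrinking $\epsilon$ if necessary, Theorem \ref{stability} produces a global NRDF solution $g(t)$ converging exponentially to $\tilde g$. Set $\bar g(t) := \Phi_{t}^{\ast} g(t)$ with $\Phi_{t}$ solving (\ref{diffeomorphism}); then $\bar g(t)$ evolves by pure NRF, its volume form satisfying
\[
\frac{\partial}{\partial t}\, d\mu_{\bar g(t)} \;=\; -u\, d\mu_{\bar g(t)}, \qquad u := R(\bar g) + n(n-1),
\]
while $u$ itself satisfies
\[
\frac{\partial u}{\partial t} \;=\; \Delta_{\bar g} u - 2(n-1)u + 2\bigl|\mathrm{Ric}(\bar g) + (n-1)\bar g\bigr|^{2}.
\]
The hypothesis $R(g) \geq -n(n-1)$ gives $u(\cdot,0) \geq 0$, and the maximum principle applied to $e^{2(n-1)t}u$, together with the decay of $u$ at the conformal infinity coming from the weighted hypothesis, preserves $u \geq 0$ on $[0,\infty)$.

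The key step is to show $\mathcal{V}_{\tilde g}(\bar g(t)) \equiv 0$. I would first verify that the weighted $C^{1}$-bound $\|e^{\tau\rho}(g(t) - \tilde g)\|_{C^{1}} \leq C\epsilon$ persists for all $t \geq 0$; this is the main technical obstacle and I expect to establish it via a weighted parabolic Schauder argument for the linearized NRDF, using the strict stability of the operator $L$ (and hence a spectral gap) to absorb the nonlinear terms. Granted such persistence, $u(\cdot, t)$ is uniformly integrable on $M$, the vector field $V(t)$ decays in the weighted norm uniformly and exponentially in $t$ near $\partial M$, and the maps $\Phi_{t}$ converge in $C^{\infty}_{\mathrm{loc}}$ to a diffeomorphism $\Phi_{\infty}$ of $M$ extending continuously to $\bar M$ as the identity on $\partial M$. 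For any compact exhaustion $\Omega_{i} \nearrow M$,
\[
\mathrm{vol}(\Omega_{i}, \bar g(t)) - \mathrm{vol}(\Omega_{i}, g) \;=\; -\int_{0}^{t}\!\!\int_{\Omega_{i}} u \, d\mu_{\bar g(s)}\, ds,
\]
so sending $i \to \infty$ yields $\mathcal{V}_{\tilde g}(\bar g(t)) = -\int_{0}^{t}\!\int_{M} u \, d\mu_{\bar g(s)}\, ds \leq 0$. The weighted convergence $\bar g(t) \to \Phi_{\infty}^{\ast}\tilde g$ combined with $\Phi_{\infty}|_{\partial M} = \mathrm{id}$ gives $\lim_{t\to\infty}\mathcal{V}_{\tilde g}(\bar g(t)) = \mathcal{V}_{\tilde g}(\Phi_{\infty}^{\ast}\tilde g) = 0$, which traps $\mathcal{V}_{\tilde g}(\bar g(t))$ identically at $0$. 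Hence $u \equiv 0$, whence $|\mathrm{Ric}(\bar g) + (n-1)\bar g|^{2} \equiv 0$: every $\bar g(t)$ is Einstein, and the NRF is therefore stationary, so $\bar g(t) \equiv g$ for all $t \geq 0$.

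Setting $\Phi := \Phi_{\infty}$, the identity $g = \bar g(t) = \Phi_{t}^{\ast} g(t)$ combined with $\Phi_{t} \to \Phi$ and $g(t) \to \tilde g$ gives $g = \Phi^{\ast}\tilde g$ on $M$. Smoothness of $\Phi$ on $M$ is inherited from the $C^{\infty}_{\mathrm{loc}}$-convergence of $\Phi_{t}$, and its continuous boundary extension with $\Phi|_{\partial M} = \mathrm{id}$ was produced as part of the previous step. The main obstacle throughout is the weighted-decay persistence along the NRDF: it is simultaneously what keeps $\mathcal{V}_{\tilde g}(\bar g(t))$ well defined and monotone, what ensures $\bar g(t) \to \Phi_{\infty}^{\ast}\tilde g$ with sufficient uniformity at infinity, and what forces $\Phi_{\infty}$ to restrict to the identity on the conformal boundary.
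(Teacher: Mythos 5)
Your proposal is correct and follows essentially the same route as the paper: run the NRDF of Theorem \ref{stability}, use the monotonicity and vanishing limit of the renormalized volume (Proposition \ref{monotonicityvolume}) to trap $\mathcal{V}$ at zero, deduce $R(\bar g(t))\equiv -n(n-1)$ and hence $\mathrm{Ric}(\bar g(t))=-(n-1)\bar g(t)$ from the scalar-curvature evolution, conclude that NRF is stationary so NRDF acts by diffeomorphisms, and obtain $\Phi_\infty$ with the stated boundary behavior from the decay of $V$ in Lemma \ref{decayofV}. The only variations are cosmetic: you track the volume of $\bar g(t)$ under NRF (no boundary flux term) where the paper tracks $g(t)$ under NRDF with a vanishing flux of $V$, and you propose weighted parabolic Schauder estimates for the spatial-decay persistence where the paper's Lemma \ref{decayingestimate} uses weighted maximum principles.
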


In particular,
\begin{thm}\label{main2}
Suppose that $\left(\mathbb{B}^n, h\right)$ is the Poincar$\acute{e}$ ball model for hyperbolic space and $g$ is another complete noncompact Riemannian metric on $\mathbb{B}^n$. For all $n\geq4$ and $\tau>n-1,$ there exists some $\epsilon=\epsilon\left(n\right)>0$ such that if
$$\|e^{\tau\rho}\left(g-h\right)\|_{C^1\left(\mathbb{B}^n,h\right)}\leq \epsilon,$$
and
$$R\left(g\right)\geq-n\left(n-1\right),$$
where $\rho=d_{h}(\cdot,\mathbb{D})$ is the distance function to some essential set $\mathbb{D}\subset \mathbb{B}^n$ with respect to $h,$ then
\begin{equation}
\mathcal{V}_{h}(g)\geq0.\nonumber
\end{equation}
When $\mathcal{V}_{h}(g)=0,$ there exists a $C^{\infty}$ diffeomorphism $\Phi:\mathbb{B}^n\longrightarrow \mathbb{B}^n,$ such that $g=\Phi^{\ast}h.$ Moreover, $\Phi$ extends continuously to some diffeomorphism on $\bar{\mathbb{B}}^n,$ and $\Phi|_{\mathbb{S}^{n-1}}=id.$
\end{thm}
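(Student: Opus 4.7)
The plan is to observe that Theorem \ref{main2} is essentially the specialization of Theorems \ref{main1} and \ref{rigidity} to the particular case $(M^n,\tilde{g})=(\mathbb{B}^n,h)$. Therefore the whole task reduces to checking that the Poincar\'e ball model $h$ falls under the hypotheses of those two theorems, namely that $(\mathbb{B}^n,h)$ is a $C^{2,\alpha}$ strictly stable conformally compact Einstein manifold, and then invoking them verbatim.

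First I would recall that on $\mathbb{B}^n$, the metric $h=\frac{4}{(1-|y|^2)^2}\sum_i dy_i^2$ has $\mathrm{Ric}(h)=-(n-1)h$ and is even smooth. Taking the defining function $x=\frac{1-|y|^2}{2}$ on $\bar{\mathbb{B}}^n$, one checks that $x^2 h$ extends smoothly up to the boundary $\mathbb{S}^{n-1}$, so $h$ is $C^{2,\alpha}$ (in fact $C^\infty$) conformally compact Einstein. Second, since the sectional curvature of $(\mathbb{B}^n,h)$ is identically $-1$, hence nonpositive, the result of Lee (\cite{Lee}, Theorem A) quoted right after Definition \ref{strictlystable} guarantees that $h$ is strictly stable in the sense of Definition \ref{strictlystable}; equivalently, the bottom $\lambda$ of the spectrum of $L=\Delta_L+2(n-1)$ on $L^2$ symmetric $2$-tensors is strictly positive.

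With these two facts in hand, the two assertions of Theorem \ref{main2} follow immediately. For the first: under the hypotheses $\|e^{\tau\rho}(g-h)\|_{C^1(\mathbb{B}^n,h)}\leq\epsilon$ and $R(g)\geq -n(n-1)$, Theorem \ref{main1} applied with $\tilde{g}=h$ yields $\mathcal{V}_h(g)\geq 0$, where the constant $\epsilon=\epsilon(n,h)$ depends only on $n$ because $h$ is fixed. For the rigidity: when $\mathcal{V}_h(g)=0$, Theorem \ref{rigidity} supplies a diffeomorphism $\Phi:\mathbb{B}^n\to\mathbb{B}^n$ with $g=\Phi^*h$, which extends continuously to $\bar{\mathbb{B}}^n$ with $\Phi|_{\partial \mathbb{B}^n}=\Phi|_{\mathbb{S}^{n-1}}=\mathrm{id}$, exactly the claim.

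Since the argument is a pure citation, there is really no single step that functions as a ``main obstacle''; the only point that merits a verification is the strict stability of $h$, and that is handled by Lee's theorem. All the analytic work—the long-time existence and exponential convergence of the NRDF, the monotonicity/sign argument used to obtain $\mathcal{V}_h(g)\geq 0$ from $R(g)\geq -n(n-1)$, and the identification of the limit metric in the rigidity case—has already been carried out in the proofs of Theorems \ref{stability}, \ref{main1} and \ref{rigidity} above, so no further computation is required here.
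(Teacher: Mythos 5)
Your proposal is correct and matches the paper's treatment exactly: the paper presents Theorem \ref{main2} as an immediate specialization ("In particular") of Theorems \ref{main1} and \ref{rigidity}, justified by the observation, already made after Definition \ref{strictlystable} via Lee's Theorem A, that hyperbolic space is a strictly stable conformally compact Einstein manifold. No further argument is given or needed.
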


The above results can be regarded as a version of Schoen's Conjecture in the case of conformally compact manifolds. Indeed, we are able to show that our renormalized volume is non-increasing along the flow (see Proposition \ref{monotonicityvolume}) and will converge to zero as time goes to infinity. In order to get these properties, we need to estimate on the gauge and the scalar curvature (see Lemma \ref{decayofV} and Lemma \ref{decayofscalarcurvature}).

Recently, Bahuaud, Mazzeo and Woolgar have studied in \cite{BMW} the evolution of another renormalized volume functional for asymptotically
Poincar$\acute{e}$-Einstein metrics which are evolving by normalized Ricci flow. And monotonicity of that quantity along the normalized Ricci flow was also obtained in their paper. However, we establish a long-time existence and convergence theorem for the normalized Ricci-DeTurck flow for metrics near a strictly stable conformally compact Einstein manifold (see Theorem \ref{stability}), hence we are able to get Theorem\ref{main1}, Theorem\ref{rigidity} and Theorem\ref{main2}.

This paper is organized as follows. In Sect.2 we will show the long-time existence and convergence of NRDF and prove Theorem \ref{stability}. In Sect.3 we give some basic estimates first and then prove our main results Theorem \ref{main1} and Theorem \ref{rigidity}. We will always omit the subscript $\tilde{g}$ in the renormalized volume functional. We will use $\|\cdot\|_{L^p},$ $\|\cdot\|_{C^k}$ for the global norm, $|\cdot|$ and $\langle\cdot,\cdot\rangle$ for the pointwise norm and inner product, $d\mu$ for the volume element, and $\rho$ for the distance function. And all these quantities are with respect to $\tilde{g}$ unless otherwise stated. We denote by $\epsilon_{i},$ $i=1,2\ldots$ some positive constants depending only on $\epsilon,$ $n$ and $\tilde{g}$ which can be arbitrary small as $\epsilon$ goes to zero, and $C_{j},$ $j=1,2\ldots,$ some positive constants depending only on $\epsilon,$ $n$ and $\tilde{g}$ which are big and bounded.

{\bf Acknowledgments} The authors are grateful to Professor Jie Qing and Dr. Romain Gicquaud for their interests in this work and helpful
conversations, and they also would like to thank the referees for numerous suggestions which helps to improve the presentation.

\section{Long-Time Existence and Convergence of NRDF}

In this section, we will prove Theorem \ref{stability}. Since $\|g-\tilde{g}\|_{C^0\left(M^n,\tilde{g}\right)}\leq \epsilon,$ due to Shi \cite{Shi}, there exists some $T=T\left(n,\tilde{g}\right)$ and a family of metrics $g\left(t\right)$ for $t\in[0,T)$ which solves the NRDF (\ref{RicciDeTurckflow}) starting from $g=g_0.$ In addition,
\begin{equation}\label{smalltimemetricdiff}
\|g\left(t\right)-\tilde{g}\|_{C^0\left(M^n,\tilde{g}\right)}\leq 2\epsilon,
\end{equation}
\begin{equation}\label{smalltimeonederivative}
\|\tilde{\nabla}g(t)\|_{C^0\left(M^n,\tilde{g}\right)}\leq \frac{\epsilon_1}{\sqrt{t}},
\end{equation}
and
\begin{equation}\label{smalltimetwoderivative}
\|\tilde{\nabla}^{2}g(t)\|_{C^0\left(M^n,\tilde{g}\right)}\leq\frac{\epsilon_2}{t},
\end{equation}
for all $t\in[0,T)$.

Let $u\left(t\right)=g\left(t\right)-\tilde{g}.$ Let $g$ denote $g\left(t\right),$ $u$ denote $u\left(t\right)$ and $g_0$ denote the initial data $g.$ Due to the assumption, $$\|g_0-\tilde{g}\|_{L^2\left(M^n,\tilde{g}\right)}\leq K.$$ In the following we compute some evolution equations under the NRDF.
\begin{lem}\label{evolutionequ}
Let $g(t)$, $t\in[0,T)$ be a solution to NRDF (\ref{RicciDeTurckflow}). Then for all $t\in[0,T),$
 \begin{equation}
 \begin{split}
\frac{\p}{\p t}|u|^2 \leq& g^{ij}\tilde{\nabla}_{i}\tilde{\nabla}_{j}|u|^2-\left(2-\epsilon_3\right)|\tilde{\nabla}g|^2
+\left(4+4|\tilde{W}|+\epsilon_4\right)|u|^2\\
\frac{\p}{\p t}|\tilde{\nabla}g|^{2}=&
g^{ij}\tilde{\nabla}_{i}\tilde{\nabla}_{j}|\tilde{\nabla}g|^{2}-2g^{ab}\tilde{g}^{mn}\tilde{g}^{ik}\tilde{g}^{jl}\tilde{\nabla}_a\tilde{\nabla}_ng_{ij}\cdot\tilde{\nabla}_b\tilde{\nabla}_mg_{kl}\\
&+\tilde{\nabla}g\ast\tilde{\nabla}g\ast\tilde{\nabla}^2g+\tilde{\nabla}g\ast\tilde{\nabla}g\ast\tilde{\nabla}g\ast\tilde{\nabla}g
+\tilde{\nabla}g\ast\tilde{\nabla}g\\
\frac{\p}{\p t}|\tilde{\nabla}^{2}g|^{2}= & g^{ij}\tilde{\nabla}_{i}\tilde{\nabla}_{j}|\tilde{\nabla}^{2}g|^{2}-2g^{ab}\tilde{g}^{mn}\tilde{g}^{ik}\tilde{g}^{jl}\tilde{g}^{pq}\tilde{\nabla}_a\tilde{\nabla}_m\tilde{\nabla}_pg_{ij}\cdot\tilde{\nabla}_b\tilde{\nabla}_n\tilde{\nabla}_qg_{kl}\\
&+\tilde{\nabla}g\ast\tilde{\nabla}^2g\ast\tilde{\nabla}^3g+\tilde{\nabla}^2 g\ast\tilde{\nabla}^2g\ast\tilde{\nabla}^2g+\tilde{\nabla}^2 g\ast\tilde{\nabla}^2g\ast\tilde{\nabla}g\ast\tilde{\nabla}g\\
&+\tilde{\nabla}^2 g\ast\tilde{\nabla}^2g+\tilde{\nabla}^2g\ast\tilde{\nabla}g\ast\tilde{\nabla}g\ast\tilde{\nabla}g\ast\tilde{\nabla}g+\tilde{\nabla}^2g\ast\tilde{\nabla}g\ast\tilde{\nabla}g\\
\end{split}\nonumber
\end{equation}
where $\ast$ denotes linear combinations with $g(t)$, $\tilde{g}$ and their inverse.
\end{lem}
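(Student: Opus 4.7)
The plan is to derive all three equations from a single ``DeTurck master identity''. By the standard computation that expresses the Ricci tensor in terms of the difference of Christoffel symbols relative to $\tilde{\Gamma}$, equation (\ref{RicciDeTurckflow}) can be rewritten as
\begin{equation*}
\frac{\partial}{\partial t}g_{ij} \;=\; g^{ab}\tilde{\nabla}_a\tilde{\nabla}_b g_{ij} \;-\; g^{ab}g_{ip}\tilde{g}^{pq}\tilde{R}_{jaqb}-g^{ab}g_{jp}\tilde{g}^{pq}\tilde{R}_{iaqb} \;+\; \tilde{\nabla}g\ast\tilde{\nabla}g \;-\; 2(n-1)g_{ij},
\end{equation*}
where the quadratic first-order piece $\tilde{\nabla}g\ast\tilde{\nabla}g$ encodes the expansion of $\Gamma-\tilde{\Gamma}$ inside $R_{ij}$ and $\ast$ indicates contractions by $g^{\pm 1}$ and $\tilde{g}^{\pm 1}$. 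This turns NRDF into a quasilinear heat equation whose leading operator is $g^{ab}\tilde{\nabla}_a\tilde{\nabla}_b$.

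For $|u|^{2}=\tilde g^{ik}\tilde g^{jl}u_{ij}u_{kl}$, I would contract $\partial_t g_{kl}$ with $2\tilde g^{ik}\tilde g^{jl}u_{ij}$ and apply the Bochner-type identity
\begin{equation*}
g^{ab}\tilde{\nabla}_a\tilde{\nabla}_b|u|^2 = 2\tilde{g}^{ik}\tilde{g}^{jl}u_{ij}\,g^{ab}\tilde{\nabla}_a\tilde{\nabla}_b g_{kl}+2g^{ab}\tilde{g}^{ik}\tilde{g}^{jl}(\tilde{\nabla}_a g_{ij})(\tilde{\nabla}_b g_{kl})
\end{equation*}
to replace the rough-Laplacian contribution by $g^{ab}\tilde{\nabla}_a\tilde{\nabla}_b|u|^2$ minus twice a mixed-metric gradient norm. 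Since $\|g-\tilde g\|_{C^0}\le\epsilon$, this mixed norm agrees with $|\tilde{\nabla}g|^2$ up to a factor $(1+\epsilon_3)$, producing the $-(2-\epsilon_3)|\tilde{\nabla}g|^2$ term. For the curvature coupling I would use the Einstein relation $\tilde R_{ij}=-(n-1)\tilde g_{ij}$: the traces $\tilde g^{ab}\tilde R_{jaib}=\tilde R_{ij}$ cancel against the $-2(n-1)g_{ij}$ normalization modulo $O(\epsilon)|u|^2$ errors, while the linearized-in-$u$ remainder is $4\tilde R_{ipjq}u^{pq}u^{ij}$. Decomposing $\tilde R$ into its Weyl and constant-sectional-curvature-$(-1)$ pieces yields
\begin{equation*}
4\tilde R_{ipjq}u^{pq}u^{ij}\;\le\; 4|\tilde W||u|^2 + 4|u|^2 - 4(\tr_{\tilde g}u)^2,
\end{equation*}
and discarding the nonpositive last term produces the stated $(4+4|\tilde W|+\epsilon_4)|u|^2$, with $\epsilon_4$ collecting all the $O(\epsilon)$ errors. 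The residual $\tilde{\nabla}g\ast\tilde{\nabla}g$ term in the master identity contributes $O(\epsilon)|\tilde{\nabla}g|^2$, absorbed into $\epsilon_3$.

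For $|\tilde{\nabla}g|^2$ and $|\tilde{\nabla}^2 g|^2$, I would apply $\tilde{\nabla}$ and $\tilde{\nabla}^2$ to the master identity, using that $\tilde{\nabla}$ is time-independent so commutes with $\partial_t$, and then repeat the Bochner step on $\tilde{\nabla}g$ and $\tilde{\nabla}^2 g$ to extract the heat operator $g^{ab}\tilde{\nabla}_a\tilde{\nabla}_b$ acting on the norm together with the explicit nonpositive double sum shown in the statement. All remaining contributions can then be collected schematically in $\ast$-notation: commuting $\tilde{\nabla}$ past $g^{ab}\tilde{\nabla}_a\tilde{\nabla}_b$ produces $\tilde{\nabla}g\ast\tilde{\nabla}^2 g$ pieces (from derivatives of $g^{ab}$) and, through the background Riemann tensor, lower-order $\tilde{\nabla}g$ pieces; differentiating the curvature-coupling term of the master identity produces $\tilde{\nabla}g\ast\tilde{\nabla}g$ at first order and $\tilde{\nabla}^2 g\ast\tilde{\nabla}g\ast\tilde{\nabla}g$ at second order; differentiating $\tilde{\nabla}g\ast\tilde{\nabla}g$ produces $\tilde{\nabla}g\ast\tilde{\nabla}g\ast\tilde{\nabla}^2 g$ and, after one more derivative, the longer chains $\tilde{\nabla}^2 g\ast\tilde{\nabla}^2 g\ast\tilde{\nabla}g\ast\tilde{\nabla}g$ and $\tilde{\nabla}^2 g\ast\tilde{\nabla}g\ast\tilde{\nabla}g\ast\tilde{\nabla}g\ast\tilde{\nabla}g$. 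The tensors $\tilde{R}_{ijkl}$ and $\tilde{\nabla}\tilde{R}_{ijkl}$ appearing in commutators are fixed and bounded on the conformally compact Einstein background, so they simply enter the $\ast$-coefficients. The only real obstacle is bookkeeping: verifying that every term generated fits one of the multilinear slots listed in the statement, and that the top-order negative double-sum term keeps its exact coefficient $-2$, as required for the exponential decay estimate of Theorem \ref{stability}.
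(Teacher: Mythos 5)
Your proposal is correct and follows essentially the same route as the paper: the authors' proof consists precisely of displaying your ``master identity'' (with the background Riemann tensor already split into its Weyl and constant-curvature-$(-1)$ parts via the Einstein condition) and then invoking ``direct computation'' for the three evolution equations. Your Bochner step, the estimate $4\tilde R_{ipjq}u^{pq}u^{ij}\le 4|\tilde W|\,|u|^2+4|u|^2-4(\mathrm{tr}_{\tilde g}u)^2$, and the term-by-term bookkeeping for $|\tilde\nabla g|^2$ and $|\tilde\nabla^2 g|^2$ simply make explicit what the paper leaves implicit.
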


\begin{proof}
Since the background metric is Einstein,
\begin{equation}
\tilde{R}_{ijkl}=\tilde{W}_{ijkl}-\left(\tilde{g}_{ik}\tilde{g}_{jl}-\tilde{g}_{il}\tilde{g}_{jk}\right)\nonumber
\end{equation}
satisfying $\tilde{g}^{pq}\tilde{W}_{ipjq}=0.$
Then a metric $g$ solving the NRDF (\ref{RicciDeTurckflow}) fulfills
\begin{equation}
\begin{split}
&\frac{\p}{\p t}g_{ij}=g^{ab}\tilde{\nabla}_{a}\tilde{\nabla}_{b}g_{ij}-2g_{ij}g^{kl}\left(g_{kl}-\tilde{g}_{kl}\right)+2\left(g_{ij}-\tilde{g}_{ij}\right)+\frac{1}{2}g^{ab}g^{pq}\\
&\cdot\left(\tilde{\nabla}_{i}g_{pa}\tilde{\nabla}_{j}g_{qb}+2\tilde{\nabla}_{a}g_{jp}\tilde{\nabla}_{q}g_{ib}-2\tilde{\nabla}_{a}g_{jp}\tilde{\nabla}_{b}g_{iq}-2\tilde{\nabla}_{j}g_{pa}\tilde{\nabla}_{b}g_{iq}-2\tilde{\nabla}_{i}g_{pa}\tilde{\nabla}_{b}g_{jq}\right)\\
&-\left(g^{kl}-\tilde{g}^{kl}\right)\left(g_{ip}-\tilde{g}_{ip}\right)\tilde{g}^{pq}\tilde{W}_{jkql}-\left(g^{kl}-\tilde{g}^{kl}\right)\left(g_{jp}-\tilde{g}_{jp}\right)\tilde{g}^{pq}\tilde{W}_{ikql}\\
&-2\left(g^{kl}-\tilde{g}^{kl}\right)\tilde{W}_{ikjl}.
\end{split}\nonumber
\end{equation}
By direct computation, we can get the other two evolution equations. Note that when the dimension $n=3,$ $\tilde{W}\equiv0.$
\end{proof}

We now rewrite the evolution equations of $u\left(t\right)=g\left(t\right)-\tilde{g}$ to which the strictly stable assumption can be related.

\begin{lem}\label{nondegenerateequ}
Let $g(t)$, $t\in[0,T)$ be a solution to the NRDF (\ref{RicciDeTurckflow}). Then for all $t\in[0,T),$
\begin{equation*}
\frac{\p}{\p t}|u|^2\leq-2\langle\tilde{L}u,u\rangle+\left(g^{ab}-\tilde{g}^{ab}\right)\tilde{\nabla}_{a}\tilde{\nabla}_{b}|u|^2+\epsilon_5\left(|\tilde{\nabla}u|^2+|u|^2\right).
\end{equation*}
\end{lem}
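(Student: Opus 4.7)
The plan is to take the first evolution inequality of Lemma \ref{evolutionequ} and perform two transformations: first, decompose the rough $g$-Laplacian of $|u|^2$ into its $\tilde g$-part plus the $(g-\tilde g)$-correction that appears in the target inequality; second, rewrite $\tilde\Delta u$ in terms of the operator $\tilde L$, exploiting the Einstein condition on $\tilde g$. The initial decomposition
\begin{equation*}
g^{ij}\tilde\nabla_i\tilde\nabla_j|u|^2 \;=\; \tilde\Delta|u|^2 + (g^{ij}-\tilde g^{ij})\tilde\nabla_i\tilde\nabla_j|u|^2
\end{equation*}
already produces the correction term of the target inequality, so it remains to express $\tilde\Delta|u|^2$ through $\tilde L$.

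For this I would apply the Bochner-type identity $\tilde\Delta|u|^2 = 2\langle\tilde\Delta u,u\rangle + 2|\tilde\nabla u|^2$, noting that $\tilde\nabla g = \tilde\nabla u$ because $\tilde g$ is $\tilde\nabla$-parallel. Then I would substitute $\tilde L = \tilde\Delta_L + 2(n-1)$ together with the defining formula $(\tilde\Delta_L u)_{ij} = -\tilde\Delta u_{ij} - 2\tilde R_{ipjq}u^{pq} + \tilde R_{iq}u_j^q + \tilde R_{jq}u_i^q$. The Einstein condition $\tilde{\mathrm{Ric}} = -(n-1)\tilde g$ collapses the two Ricci contractions into $-2(n-1)u_{ij}$, which is precisely cancelled by the shift $+2(n-1)u_{ij}$ in $\tilde L$; then the Weyl decomposition $\tilde R_{ijkl} = \tilde W_{ijkl} - (\tilde g_{ik}\tilde g_{jl} - \tilde g_{il}\tilde g_{jk})$ converts the Riemann contraction into a Weyl contraction plus algebraic terms involving $\mathrm{tr}_{\tilde g} u$ and $u_{ij}$.

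Pairing with $u^{ij}$ and using the Bochner identity gives the clean rewriting
\begin{equation*}
\tilde\Delta|u|^2 = -2\langle\tilde L u,u\rangle - 4\tilde W_{ipjq}u^{pq}u^{ij} + 4(\mathrm{tr}_{\tilde g} u)^2 - 4|u|^2 + 2|\tilde\nabla u|^2.
\end{equation*}
Substituting this into the first inequality of Lemma \ref{evolutionequ}, the $-4|u|^2$ above cancels against the $+4|u|^2$ present there; the gradient contributions combine into $\epsilon_3|\tilde\nabla u|^2$; and the remaining algebraic quadratic terms $-4\tilde W_{ipjq}u^{pq}u^{ij} + 4(\mathrm{tr}_{\tilde g}u)^2 + (4|\tilde W|+\epsilon_4)|u|^2$ are bounded pointwise by a multiple of $|u|^2$, using $|\tilde W_{ipjq}u^{pq}u^{ij}| \leq |\tilde W||u|^2$, $|\mathrm{tr}_{\tilde g} u|^2 \leq n|u|^2$, and the uniform bound on $|\tilde W|$ available for the $C^{2,\alpha}$ conformally compact Einstein background. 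Collecting these into a single coefficient $\epsilon_5$ in front of $|\tilde\nabla u|^2 + |u|^2$ yields the stated inequality.

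I expect the only real obstacle to be careful bookkeeping of signs and indices in the Bochner/Lichnerowicz comparison; there is no genuine analytic difficulty, as the derivation is algebraic once the Einstein structure of $\tilde g$ is used. The key simplification is that $\tilde{\mathrm{Ric}} = -(n-1)\tilde g$ kills the Ricci part of $\tilde\Delta_L$ against the $+2(n-1)$ shift, and the $-4|u|^2$ produced by the Weyl substitution is exactly matched by the $+4|u|^2$ already appearing in Lemma \ref{evolutionequ}, which is what makes $\tilde L$ the natural operator to appear on the right-hand side.
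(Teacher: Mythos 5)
Your strategy---extract $\tilde L$ from the Laplacian of $|u|^2$ via the Bochner identity, the Einstein condition and the Weyl decomposition---is sound in spirit, and your intermediate identity
\begin{equation*}
\tilde\Delta|u|^2 \;=\; -2\langle\tilde L u,u\rangle \;-\; 4\tilde W_{ipjq}u^{pq}u^{ij} \;+\; 4(\tr_{\tilde g}u)^2 \;-\; 4|u|^2 \;+\; 2|\tilde\nabla u|^2
\end{equation*}
is correct. The gap is in the final step. After substituting this into the first inequality of Lemma \ref{evolutionequ}, the leftover zeroth-order terms are $-4\tilde W_{ipjq}u^{pq}u^{ij}+4(\tr_{\tilde g}u)^2+(4|\tilde W|+\epsilon_4)|u|^2$, and these are \emph{not} bounded by $\epsilon_5|u|^2$ with $\epsilon_5\to0$ as $\epsilon\to0$: already for $u$ proportional to $\tilde g$ one has $4(\tr_{\tilde g}u)^2=4n|u|^2$, and the Weyl pair contributes up to $8|\tilde W||u|^2$. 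In this paper's convention the symbols $\epsilon_i$ denote constants that become arbitrarily small as $\epsilon\to0$, and this smallness of $\epsilon_5$ is essential later: in the proof of Lemma \ref{ltwotimedecay} the decay rate is $-2\kappa=-(2-a)\lambda+2a\|\tilde Rm\|_{C^0}+(6b^2+1)C_1\epsilon+(2b^2+1)\epsilon_5$, which is negative only because $\epsilon_5$ can be made small relative to $\lambda$. A bound of the form $C|u|^2$ with $C\geq 4n$ proves a strictly weaker statement that is useless for the $L^2$ decay.

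The reason your route cannot be repaired as written is that Lemma \ref{evolutionequ} has already discarded the exact quadratic structure you need: the coefficient $(4+4|\tilde W|+\epsilon_4)$ there is a one-sided coarsening of the exact zeroth-order terms $-2g_{ij}g^{kl}(g_{kl}-\tilde g_{kl})+2u_{ij}-2(g^{kl}-\tilde g^{kl})\tilde W_{ikjl}+\cdots$ paired with $2u^{ij}$, i.e.\ of $-4(\tr u)^2+4|u|^2+4\tilde W_{ipjq}u^{pq}u^{ij}+O(|u|^3)$, and these exact quadratic pieces are precisely what must cancel against the $+4(\tr u)^2-4|u|^2-4\tilde W_{ipjq}u^{pq}u^{ij}$ produced by your Weyl substitution. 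One cannot recover a cancellation from an inequality. The paper therefore works instead with the exact evolution equation of $u_{ij}$, writes $\frac{\p}{\p t}u=-\tilde Lu+\mathcal F(g,\tilde g,u)$ with $\tilde L$ extracted exactly, and then verifies in coordinates diagonalizing $g$ relative to $\tilde g$ that the algebraic remainder $\Theta=\mathbf P+\mathbf Q+\mathbf S$ paired with $u$ is genuinely \emph{cubic} in the eigenvalue differences (each term carries a factor $(\lambda_k-1)^2(\lambda_i-1)/\lambda_k$), whence its contribution is $O(\epsilon)|u|^2$. To complete your proof you must bypass the inequality of Lemma \ref{evolutionequ} and carry out this exact extraction and cubic-order bookkeeping.
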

\begin{proof}
Since $\tilde{R}_{ij}+(n-1)\tilde{g}_{ij}=0$ and $\tilde{V}_i=0,$ then $u\left(t\right)$ fulfills
\begin{equation}
\begin{split}
&\frac{\p}{\p t}u_{ij}\\
&=-2(R_{ij}+(n-1)g_{ij})+\nabla_{i}V_j+\nabla_{j}V_i-\left(-2(\tilde{R}_{ij}+(n-1)\tilde{g}_{ij})+\nabla_{i}\tilde{V}_j+\nabla_{j}\tilde{V}_i\right)\\
&=-\left(\tilde{\Delta}_L+2\left(n-1\right)\right)u_{ij}+\mathcal{F}\left(g,\tilde{g},u\right)\\
\end{split}\nonumber
\end{equation}
where
\begin{equation}\label{F}
\begin{split}
&\mathcal{F}\left(g,\tilde{g},u\right)\\
=&-2\tilde{R}_{ikjl}u^{kl}-g^{kl}g_{ip}\tilde{g}^{pq}\tilde{R}_{jkql}-g^{kl}g_{jp}\tilde{g}^{pq}\tilde{R}_{ikql}+\tilde{R}_{ik}u_{j}^{k}
+\tilde{R}_{jk}u_{i}^{k}+\frac{1}{2}g^{ab}g^{pq}\\
&\cdot\left(\tilde{\nabla}_{i}g_{pa}\tilde{\nabla}_{j}g_{qb}+2\tilde{\nabla}_{a}g_{jp}\tilde{\nabla}_{q}g_{ib}
-2\tilde{\nabla}_{a}g_{jp}\tilde{\nabla}_{b}g_{iq}-2\tilde{\nabla}_{j}g_{pa}\tilde{\nabla}_{b}g_{iq}
-2\tilde{\nabla}_{i}g_{pa}\tilde{\nabla}_{b}g_{jq}\right)\\
&+\left(g^{ab}-\tilde{g}^{ab}\right)\tilde{\nabla}_{a}\tilde{\nabla}_{b}g_{ij}-2\left(n-1\right)\tilde{g}_{ij}.\\
\end{split}\nonumber
\end{equation}

Hence
\begin{equation}
\mathcal{F}\left(g,\tilde{g},u\right)=\left(g^{ab}-\tilde{g}^{ab}\right)\tilde{\nabla}_{a}\tilde{\nabla}_{b}g_{ij}+
\tilde{\nabla}g\ast\tilde{\nabla}g+\Theta\nonumber
\end{equation}
where
\begin{equation}
\begin{split}
\Theta=&-2\tilde{R}_{ikjl}u^{kl}-g^{kl}g_{ip}\tilde{g}^{pq}\tilde{R}_{jkql}-g^{kl}g_{jp}\tilde{g}^{pq}\tilde{R}_{ikql}+\tilde{R}_{ik}u_{j}^{k}+\tilde{R}_{jk}u_{i}^{k}-2\left(n-1\right)\tilde{g}_{ij}\\
=&-4\tilde{R}_{ikjl}u^{kl}+\left(\tilde{g}^{kl}-g^{kl}\right)u_{ip}\tilde{g}^{pq}\tilde{R}_{jkql}+\left(\tilde{g}^{kl}-g^{kl}\right)u_{jp}\tilde{g}^{pq}\tilde{R}_{ikql}+2\left(g_{ab}\tilde{g}^{ak}\tilde{g}^{bl}-g^{kl}\right)\tilde{R}_{ikjl}\\
=&-4\tilde{W}_{ikjl}u^{kl}+4\left(\tilde{g}_{ij}\tilde{g}_{kl}-\tilde{g}_{il}\tilde{g}_{kj}\right)u^{kl}+\left(\tilde{g}^{kl}-g^{kl}\right)u_{ip}\tilde{g}^{pq}\tilde{R}_{jkql}+\left(\tilde{g}^{kl}-g^{kl}\right)u_{jp}\tilde{g}^{pq}\tilde{R}_{ikql}\\
&+2\left(g_{ab}\tilde{g}^{ak}\tilde{g}^{bl}-g^{kl}\right)\tilde{W}_{ikjl}-2\left(g_{ab}\tilde{g}^{ak}\tilde{g}^{bl}-g^{kl}\right)\left(\tilde{g}_{ij}\tilde{g}_{kl}-\tilde{g}_{il}\tilde{g}_{kj}\right)\\
=&-4\tilde{W}_{ikjl}u^{kl}+2\left(g_{ab}\tilde{g}^{ak}\tilde{g}^{bl}-g^{kl}\right)\tilde{W}_{ikjl}+2\left(g_{ab}\tilde{g}^{ak}\tilde{g}^{bl}+g^{kl}-2\tilde{g}^{kl}\right)\left(\tilde{g}_{ij}\tilde{g}_{kl}-\tilde{g}_{il}\tilde{g}_{kj}\right)\\
&+\left(\tilde{g}^{kl}-g^{kl}\right)u_{ip}\tilde{g}^{pq}\tilde{R}_{jkql}+\left(\tilde{g}^{kl}-g^{kl}\right)u_{jp}\tilde{g}^{pq}\tilde{R}_{ikql}.\\
\end{split}\nonumber
\end{equation}

We decompose $\Theta$ into three parts
\begin{equation}
\Theta=\mathbf{P}+\mathbf{Q}+\mathbf{S}\nonumber
\end{equation}
where
\begin{equation}
\begin{split}
\mathbf{P}=&-4\tilde{W}_{ikjl}u^{kl}+2\left(g_{ab}\tilde{g}^{ak}\tilde{g}^{bl}-g^{kl}\right)\tilde{W}_{ikjl}\\
=&-2\left(g_{ab}\tilde{g}^{ak}\tilde{g}^{bl}+g^{kl}-2\tilde{g}^{kl}\right)\tilde{W}_{ikjl},\nonumber
\end{split}
\end{equation}

\begin{equation}
\begin{split}
\mathbf{Q}=&2\left(g_{ab}\tilde{g}^{ak}\tilde{g}^{bl}+g^{kl}-2\tilde{g}^{kl}\right)\left(\tilde{g}_{ij}\tilde{g}_{kl}-\tilde{g}_{il}\tilde{g}_{kj}\right)\\
=&2\left(g_{ab}\tilde{g}^{ab}\tilde{g}_{ij}-g_{ij}+g^{kl}\tilde{g}_{kl}\tilde{g}_{ij}-g^{kl}\tilde{g}_{il}\tilde{g}_{kj}-2(n-1)\tilde{g}_{ij}\right)
\end{split}\nonumber
\end{equation}
and
\begin{equation}
\mathbf{S}=\left(\tilde{g}^{kl}-g^{kl}\right)u_{ip}\tilde{g}^{pq}\tilde{R}_{jkql}+\left(\tilde{g}^{kl}-g^{kl}\right)u_{jp}\tilde{g}^{pq}\tilde{R}_{ikql}.\nonumber
\end{equation}

Choose a coordinate system $\{x^i\}$ such that at one point, we have $\tilde{g}_{ij}=\delta_{ij}$ and $g_{ij}=\lambda_i\delta_{ij}$ with $|\lambda_i-1|\leq2\epsilon$. From the assumption,
\begin{equation*}
\begin{split}
\frac{\p}{\p t}|u|^2=&2\sum_iu_{ii}\frac{\p}{\p t}u_{ii}\\
=&-2\langle\tilde{L}u,u\rangle+2\langle\left(g^{ab}-\tilde{g}^{ab}\right)\tilde{\nabla}_{a}\tilde{\nabla}_{b}u,u\rangle\\
&+\sum_i\left(\tilde{\nabla}g\ast\tilde{\nabla}g\right)_{ii}u_{ii}+2\sum_i\Theta_{ii}u_{ii}.\\
\end{split}
\end{equation*}
We have
\begin{equation*}
\sum_i\left(\tilde{\nabla}g\ast\tilde{\nabla}g\right)_{ii}u_{ii}\leq2\epsilon |\tilde{\nabla}u|^2
\end{equation*}
and
\begin{equation*}
\begin{split}
&2\langle\left(g^{ab}-\tilde{g}^{ab}\right)\tilde{\nabla}_{a}\tilde{\nabla}_{b}u,u\rangle\\
=&\left(g^{ab}-\tilde{g}^{ab}\right)\tilde{\nabla}_{a}\tilde{\nabla}_{b}|u|^2-2\left(g^{ab}-\tilde{g}^{ab}\right)\langle\tilde{\nabla}_{a}u,\tilde{\nabla}_{b}u\rangle\\
\leq&\left(g^{ab}-\tilde{g}^{ab}\right)\tilde{\nabla}_{a}\tilde{\nabla}_{b}|u|^2+4\epsilon|\tilde{\nabla}u|^2.\\
\end{split}
\end{equation*}

Next we only need to check the term $\sum_i\left(\mathbf{P}+\mathbf{Q}+\mathbf{S}\right)_{ii}u_{ii}.$ It is obvious to show that
\begin{equation*}
\begin{split}
\sum_i\mathbf{P}_{ii}u_{ii}=&-2\sum_i\sum_k(\frac{1}{\lambda_k}+\lambda_k-2)(\lambda_i-1)\tilde{W}_{ikik}\\
=&-2\sum_i\sum_k\frac{(\lambda_k-1)^2}{\lambda_k}(\lambda_i-1)\tilde{W}_{ikik}\\
\leq &\epsilon_6 |u|^2,
\end{split}
\end{equation*}
\begin{equation}
\begin{split}
\sum_i\mathbf{Q}_{ii}u_{ii}=&2\sum_i\left(\sum_k\lambda_k-\lambda_i+\sum_k\frac{1}{\lambda_k}-\frac{1}{\lambda_i}-2(n-1)\right)(\lambda_i-1)\\
=&2\sum_i(\lambda_i-1)\sum_{k\neq i}\frac{(\lambda_k-1)^2}{\lambda_k}\\
\leq&\epsilon_7 |u|^2,\\
\end{split}\nonumber
\end{equation}
and
$$\sum_i\mathbf{S}_{ii}u_{ii}\leq \epsilon_8|u|^2.$$

Hence we finish the proof.
\end{proof}

In \cite{SSS} Theorem 3.1, the authors used the first eigenvalue on hyperbolic domains to get a Lyapunov function and obtain the exponential decay of the $L^2$ norm of $g\left(t\right)-\tilde{g}$ with respect to time and finally the convergence of NRDF. Thanks to the strictly stable condition on the background conformally compact Einstein metric $\tilde{g},$ which makes us be able to establish the exponential decay of the $L^2$ norm of $g\left(t\right)-\tilde{g}$ with respect to time and finally get a linearized stability of $\tilde{g}.$

\begin{lem}\label{ltwotimedecay}
Let $g(t)$, $t\in[0,T)$ be a solution to the NRDF (\ref{RicciDeTurckflow}). Then there exists some constant $\kappa>0,$ depending only on $\epsilon,$ $n$ and $\tilde{g}$ such that for all $t\in[0,T),$
\begin{equation*}
\|u\|_{L^{2}\left(M,\tilde{g}\right)}\leq e^{-\kappa t}K.
\end{equation*}
In addition, $\kappa$ is very close to $\lambda.$
\end{lem}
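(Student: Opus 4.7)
The plan is to promote the pointwise differential inequality of Lemma \ref{nondegenerateequ} to a scalar ODE for $Y(t):=\int_M |u|^2\,d\mu_{\tilde{g}}$ and then invoke the strict stability hypothesis. First I would integrate Lemma \ref{nondegenerateequ} over $M$ against $d\mu_{\tilde{g}}$. The main term $-2\int_M\langle \tilde{L}u,u\rangle\,d\mu_{\tilde{g}}$ will ultimately be controlled by $-2\lambda\int_M|u|^2\,d\mu_{\tilde{g}}$ via Definition \ref{strictlystable}, provided $u(t)$ lies in the admissible Sobolev class at each $t$. The initial datum has $\|u(0)\|_{L^2}\leq K$ and $\|u(0)\|_{C^0}\leq\epsilon$; for $t>0$ the Shi-type bounds (\ref{smalltimeonederivative})--(\ref{smalltimetwoderivative}) provide the pointwise control on $\tilde{\nabla}u$ needed to verify $\tilde{\nabla}u\in L^2$, combined with a cutoff exhaustion.

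Next I would deal with the non-self-adjoint term $(g^{ab}-\tilde{g}^{ab})\tilde{\nabla}_a\tilde{\nabla}_b|u|^2$ appearing in Lemma \ref{nondegenerateequ} by integrating by parts against a smooth cutoff $\chi_R$ equal to $1$ on $B_{\tilde{g}}(p,R)$ and supported in $B_{\tilde{g}}(p,R+1)$. Sending $R\to\infty$ kills the boundary contributions thanks to the global $L^2$ and $C^0$ control on $u$ together with the pointwise bound on $\tilde{\nabla}g$. The surviving term is $-\int_M\tilde{\nabla}_a(g^{ab}-\tilde{g}^{ab})\,\tilde{\nabla}_b|u|^2\,d\mu_{\tilde{g}}$, and since $\tilde{\nabla}(g^{ab}-\tilde{g}^{ab})$ is linear in $\tilde{\nabla}u$ with $O(1)$ coefficients while $|g-\tilde{g}|\leq 2\epsilon$, Cauchy--Schwarz bounds it pointwise by $O(\epsilon)|\tilde{\nabla}u|^2$; thus the whole contribution is at most $\epsilon_{11}\int_M|\tilde{\nabla}u|^2\,d\mu_{\tilde{g}}$, of the same shape as the $\epsilon_5|\tilde{\nabla}u|^2$ term already present.

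To absorb these gradient terms I would use the standard integrated identity $\int_M\langle \tilde{L}u,u\rangle\,d\mu_{\tilde{g}}=\int_M|\tilde{\nabla}u|^2\,d\mu_{\tilde{g}}+\int_M Q(u,u)\,d\mu_{\tilde{g}}$, where $Q$ is a pointwise algebraic expression in $u$ with curvature coefficients of $\tilde{g}$ satisfying $|Q(u,u)|\leq C_{\tilde{g}}|u|^2$; this follows from the definition of $\tilde{\Delta}_L$ and one more integration by parts via the same cutoff exhaustion. Hence $\int|\tilde{\nabla}u|^2\leq\int\langle \tilde{L}u,u\rangle+C_{\tilde{g}}\int|u|^2$. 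Combining the above gives $\tfrac{d}{dt}Y(t)\leq-(2-\epsilon_{12})\int_M\langle \tilde{L}u,u\rangle\,d\mu_{\tilde{g}}+\epsilon_{13}Y(t)$ with $\epsilon_{12},\epsilon_{13}\to 0$ as $\epsilon\to 0$. Applying strict stability then yields $Y'(t)\leq-2\kappa Y(t)$ with $\kappa=\tfrac{1}{2}\bigl((2-\epsilon_{12})\lambda-\epsilon_{13}\bigr)$, positive and arbitrarily close to $\lambda$ for small $\epsilon$, and Gronwall delivers $Y(t)\leq e^{-2\kappa t}K^2$.

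The main obstacles I anticipate are (i) rigorously justifying the two integrations by parts and the applicability of Definition \ref{strictlystable} at each $t>0$, which requires combining the Shi estimates with a cutoff exhaustion to produce an $H^1(M,\tilde{g})$ representative of $u(t)$, and (ii) tracking constants carefully enough that $\kappa$ is strictly positive and close to $\lambda$ uniformly in $t$, which is what ultimately forces the smallness of $\epsilon$ in the hypothesis of the theorem.
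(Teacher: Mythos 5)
Your algebraic core is the same as the paper's: split off a small multiple of $\int\langle\tilde{L}u,u\rangle$ and integrate by parts to produce a good term $-a\int|\tilde{\nabla}u|^2$ (using that $\tilde{L}u=-\tilde{\Delta}u-2\tilde{R}_{ipjq}u^{pq}$ for Einstein $\tilde{g}$), absorb the $O(\epsilon)|\tilde{\nabla}u|^2$ errors from the non-self-adjoint term and from Lemma \ref{nondegenerateequ} into it, apply strict stability to the remaining $-(2-a)\int\langle\tilde{L}u,u\rangle$, and conclude by Gronwall with $\kappa\to\lambda$ as $a,\epsilon\to 0$. That part is fine and matches the paper's decomposition $\mathbf{E}+\mathbf{F}+\mathbf{G}+\mathbf{H}$.

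The genuine gap is in your treatment of non-compactness. You assert that the Shi-type pointwise bounds (\ref{smalltimeonederivative})--(\ref{smalltimetwoderivative}) "verify $\tilde{\nabla}u\in L^2$", and that the boundary terms from the compactly supported cutoff $\chi_R$ die as $R\to\infty$ "thanks to the global $L^2$ control on $u$". Neither step works as stated: $(M,\tilde{g})$ has infinite volume (growing like $e^{(n-1)s}$), so $|u|\le 2\epsilon$ and $|\tilde{\nabla}u|\le\epsilon_1/\sqrt{t}$ give no $L^2$ information at positive times, and the global $L^2$ bound on $u(t)$ for $t>0$ is precisely what the lemma is proving, so invoking it to kill the cutoff errors is circular. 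Concretely, your annulus error terms are bounded only by $C(t)\,\mathrm{vol}\left(B(R+1)\setminus B(R)\right)\sim e^{(n-1)R}$, which blows up. The paper's fix is to replace $\chi_R$ by the weight $\eta=e^{-bs}$ with $b>n-1$, which beats the volume growth so that $\eta u$ and $\tilde{\nabla}(\eta u)$ are automatically in $L^2$ from the pointwise bounds alone (this is what legitimizes applying Definition \ref{strictlystable} to $\eta u$ and all integrations by parts); it then runs the argument twice: a first crude Gronwall yielding the a priori bounds $\|u(t)\|_{L^2}\le e^{C_2T/2}K$ and $\int_0^t\int_M|\tilde{\nabla}u|^2\,d\mu\,ds<\infty$, and only then the sharp estimate, in which the remainder $\mathbf{F}$ (supported outside the essential set $\Omega$) is shown to vanish as $\Omega$ exhausts $M$ precisely because of those a priori bounds. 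You would need to add this weighted, two-pass structure (or an equivalent device) for your proof to close.
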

\begin{proof}
We construct a function $\eta=e^{-bs}$ where $b>n-1$ is a constant and $s$ is the distance function to certain fixed large essential set $\Omega\subset M^n$ with respect to $\tilde{g}.$ Hence $\eta\in[0,1]$ and $\eta\equiv1$ on $\Omega.$ And we have that
\begin{equation}
|\tilde{\Delta}\eta|=|\left(b^2|\tilde{\nabla}s|^2-b\tilde{\Delta}s\right)\eta|\leq C\left(n\right)\eta\nonumber
\end{equation}
because $\tilde{\Delta}s=n-1+O\left(e^{-cs}\right)$ for some $c>0$ due to Lemma 2.1 in \cite{HQS}. By the estimates (\ref{smalltimemetricdiff}) and (\ref{smalltimeonederivative}), $\|\eta u\|_{L^{2}\left(M,\tilde{g}\right)}$ and $\|\tilde{\nabla}\left(\eta u\right)\|_{L^{2}\left(M,\tilde{g}\right)}$ are bounded for all $t\in(0,T).$ Then from the assumption that $\tilde{g}$ is a strictly stable conformally compact Einstein metric, we have that
\begin{equation}
\int_{M}\langle \tilde{L}\left(\eta u\right),\eta u\rangle d\mu\geq\lambda\int_{M}|\eta u|^2d\mu.\nonumber
\end{equation}
It follows from Lemma \ref{nondegenerateequ} that
\begin{equation*}
\begin{split}
&\frac{\p}{\p t}\int_{M} |\eta u|^2d\mu\\
=&\int_{M}\eta^2\frac{\p}{\p t}|u|^2d\mu\\
\leq& \int_{M}-2\eta^2\langle\tilde{L}u,u\rangle+\eta^2\left(g^{ab}-\tilde{g}^{ab}\right)\tilde{\nabla}_{a}\tilde{\nabla}_{b}|u|^2+\epsilon_5\eta^2\left(|\tilde{\nabla}u|^2+|u|^2\right)d\mu\\
=&-2\int_{M}\langle\tilde{L}\left(\eta u\right),\eta u\rangle d\mu-2\int_{M}\langle\tilde{\Delta}\eta\cdot u+2\tilde{\nabla}_{\tilde{\nabla}\eta}u,\eta u\rangle d\mu\\
&+\int_{M}\eta^2\left(g^{ab}-\tilde{g}^{ab}\right)\tilde{\nabla}_{a}\tilde{\nabla}_{b}|u|^2d\mu+\epsilon_5\int_{M}\eta^2\left(|\tilde{\nabla}u|^2+|u|^2\right)d\mu\\
=&\mathbf{E}+\mathbf{F}+\mathbf{G}+\mathbf{H}.
\end{split}
\end{equation*}
By direct computation, we have
\begin{equation*}
\begin{split}
\mathbf{E}=&-2\int_{M}\langle\tilde{L}\left(\eta u\right),\eta u\rangle d\mu\\
=&-\left(2-a\right)\int_{M}\langle\tilde{L}\left(\eta u\right),\eta u\rangle d\mu-a\int_{M}\langle\tilde{L}\left(\eta u\right),\eta u\rangle d\mu\\
\leq&-\left(2-a\right)\lambda\int_{M}|\eta u|^2d\mu+a\int_{M}\langle\tilde{\Delta}\left(\eta u\right),\eta u\rangle d\mu+2a\int_{M}\tilde{R}_{ipjq}\eta u^{pq}\eta u^{ij}d\mu\\
\leq&\left(-\left(2-a\right)\lambda+2a\|\tilde{R}m\|_{C^{0}\left(M,\tilde{g}\right)}\right)\int_{M}|\eta u|^2d\mu-a\int_{M}|\tilde{\nabla}\left(\eta u\right)|^2d\mu\\
\end{split}
\end{equation*}
where $a\in(0,2)$ will be determined later and we have used the strictly stable assumption and divergence theorem in the above inequality. The boundary integral that appears in the divergence theorem vanishes because $\eta$ has enough decay at spatial infinity.

\begin{equation*}
\begin{split}
\mathbf{G}=&\int_{M}\eta^2\left(g^{ab}-\tilde{g}^{ab}\right)\tilde{\nabla}_{a}\tilde{\nabla}_{b}|u|^2d\mu\\
=&-\int_{M}\eta^2\tilde{\nabla}_{a}\left(g^{ab}-\tilde{g}^{ab}\right)\tilde{\nabla}_{b}|u|^2d\mu-\int_{M}2\eta\tilde{\nabla}_{a}\eta\cdot\left(g^{ab}-\tilde{g}^{ab}\right)\tilde{\nabla}_{b}|u|^2d\mu\\
\leq&C_1\int_{M}\eta^2|u|\cdot|\tilde{\nabla}u|\cdot|\tilde{\nabla}|u||d\mu+C_1\int_{M}\eta^2|u|^2\cdot|\tilde{\nabla}|u||d\mu\\
\leq&2C_1\epsilon\int_{M}|\eta\tilde{\nabla}u|^2d\mu+C_1\epsilon\int_{M}|\eta u|^2+|\eta\tilde{\nabla}u|^2d\mu\\
\leq&3C_1\epsilon\int_{M}\left(2|\tilde{\nabla}\left(\eta u\right)|^2+2b^2|\eta u|^2\right)d\mu+C_1\epsilon\int_{M}|\eta u|^2d\mu\\
\leq&\left(6b^2+1\right)C_1\epsilon\int_{M}|\eta u|^2d\mu+6C_1\epsilon\int_{M}|\tilde{\nabla}\left(\eta u\right)|^2d\mu\\
\end{split}
\end{equation*}
where we have used the divergence theorem again and Kato's inequality in the above inequality.
\begin{equation*}
\begin{split}
\mathbf{H}=&\int_{M}\epsilon_5\eta^2\left(|\tilde{\nabla}u|^2+|u|^2\right)d\mu\\
\leq&\epsilon_5\int_{M}\left(2|\tilde{\nabla}\left(\eta u\right)|^2+2b^2|\eta u|^2\right)d\mu+\epsilon_5\int_{M}|\eta u|^2d\mu\\
\leq&2\epsilon_5\int_{M}|\tilde{\nabla}\left(\eta u\right)|^2d\mu+\left(2b^2+1\right)\epsilon_5\int_{M}|\eta u|^2d\mu.\\
\end{split}
\end{equation*}
For the last term $\mathbf{F},$ a rough estimate shows that
\begin{equation*}
\begin{split}
\mathbf{F}=&-2\int_{M}\langle\tilde{\Delta}\eta\cdot u+2\tilde{\nabla}_{\tilde{\nabla}\eta}u,\eta u\rangle d\mu\\
\leq&2C\left(n\right)\int_{M}|\eta u|^2d\mu+4b\int_{M}|\eta\tilde{\nabla}u||\eta u| d\mu\\
\leq&2C\left(n\right)\int_{M}|\eta u|^2d\mu+2b\int_{M}\sigma^2|\eta\tilde{\nabla}u|^2+\frac{1}{\sigma^2}|\eta u|^2d\mu\\
\leq&\left(2C\left(n\right)+\frac{2b}{\sigma^2}\right)\int_{M}|\eta u|^2d\mu+2b\sigma^2\int_{M}\left(2|\tilde{\nabla}\left(\eta u\right)|^2+2b^2|\eta u|^2\right)d\mu\\
\leq&\left(2C\left(n\right)+\frac{2b}{\sigma^2}+4b^3\sigma^2\right)\int_{M}|\eta u|^2d\mu+4b\sigma^2\int_{M}|\tilde{\nabla}\left(\eta u\right)|^2d\mu\\
\end{split}
\end{equation*}
where $\sigma$ is a sufficiently small constant which will be determined later.

Combine all these estimates together, we get
\begin{equation}\label{equforetau}
\frac{\p}{\p t}\int_{M} |\eta u|^2d\mu\leq C\left(n,\epsilon,\tilde{g},\sigma,a\right)\int_{M}|\eta u|^2d\mu-A\int_{M}|\tilde{\nabla}\left(\eta u\right)|^2d\mu
\end{equation}
where $-A=-a+6C_1\epsilon+2\epsilon_5+4b\sigma^2.$ For any fixed $a\in(0,2),$ we can always choose $\epsilon$ and $\sigma$ to be sufficiently small such that $A>0.$ Hence the above equation (\ref{equforetau}) becomes an ODE
\begin{equation}
\frac{\p}{\p t}\int_{M} |\eta u|^2d\mu\leq C_2\int_{M}|\eta u|^2d\mu.\nonumber
\end{equation}
By solving it, we get
\begin{equation}
\int_{M} |\eta u\left(t\right)|^2d\mu\leq e^{C_2t}\int_{M} |\eta u\left(0\right)|^2d\mu\nonumber
\end{equation}
for all $t\in[0,T).$
Let $\Omega$ exhaust to $M,$ which implies that for all $t\in[0,T),$
\begin{equation}\label{L2boundofu}
\|u\left(t\right)\|_{L^{2}\left(M,\tilde{g}\right)}\leq e^{\frac{C_2}{2}T}K.
\end{equation}
Also by the equation (\ref{equforetau}), we have
\begin{equation}
Ae^{-C_2t}\int_{M}|\tilde{\nabla}\left(\eta u\right)|^2d\mu\leq-\frac{\p}{\p t}\left(e^{-C_2t}\int_{M} |\eta u|^2d\mu\right),\nonumber
\end{equation}
which implies that for all $t\in(0,T),$
\begin{equation}
\int_{0}^{t}\int_{M}|\tilde{\nabla}\left(\eta u\right)|^2d\mu ds\leq\frac{K^2}{A}e^{C_2T}.\nonumber
\end{equation}
Furthermore, let $\Omega$ exhaust to $M,$ we get that for all $t\in(0,T),$
\begin{equation}\label{L2boundofnablau}
\int_{0}^{t}\int_{M}|\tilde{\nabla}u|^2d\mu ds\leq\frac{K^2}{A}e^{C_2T}.
\end{equation}
Now we take a deep investigation into the term $\mathbf{F}.$ Since $\eta\equiv1$ on $\Omega,$ we see that

\begin{equation*}
\begin{split}
\mathbf{F}=&-2\int_{M}\langle\tilde{\Delta}\eta\cdot u+2\tilde{\nabla}_{\tilde{\nabla}\eta}u,\eta u\rangle d\mu\\
=&-2\int_{M\backslash\Omega}\eta\tilde{\Delta}\eta|u|^2d\mu-4\int_{M\backslash\Omega}\langle\tilde{\nabla}_{\tilde{\nabla}\eta}u,\eta u\rangle d\mu.\\
\end{split}
\end{equation*}
Hence
\begin{equation*}
\begin{split}
&\frac{\p}{\p t}\int_{M}|\eta u|^2d\mu\\
\leq& \left(-\left(2-a\right)\lambda+2a\|\tilde{R}m\|_{C^{0}\left(M,\tilde{g}\right)}+\left(6b^2+1\right)C_1\epsilon+\left(2b^2+1\right)\epsilon_5\right)\int_{M}|\eta u|^2d\mu\\
&+\left(-a+6C_1\epsilon+2\epsilon_5\right)\int_{M}|\tilde{\nabla}\left(\eta u\right)|^2d\mu\\
&-2\int_{M\backslash\Omega}\eta\tilde{\Delta}\eta|u|^2d\mu-4\int_{M\backslash\Omega}\langle\tilde{\nabla}_{\tilde{\nabla}\eta}u,\eta u\rangle d\mu.\\
\end{split}
\end{equation*}
This time we still choose $a$ to be sufficiently small. For this fixed $a\in\left(0,2\right),$ we see that we can pick an $\epsilon$ sufficiently small such that
$$-2\kappa=-\left(2-a\right)\lambda+2a\|\tilde{R}m\|_{C^{0}\left(M,\tilde{g}\right)}+\left(6b^2+1\right)C_1\epsilon+\left(2b^2+1\right)\epsilon_5<0$$ and $$-a+6C_1\epsilon+2\epsilon_5<0.$$
By the fact that $\epsilon_5$ tends to zero as $\epsilon$ goes to zero, $\kappa$ is indeed very close to $\lambda.$ It follows that
\begin{equation}
\frac{\p}{\p t}\int_{M}|\eta u|^2d\mu\leq -2\kappa\int_{M}|\eta u|^2d\mu-2\int_{M\backslash\Omega}\eta\tilde{\Delta}\eta|u|^2d\mu-4\int_{M\backslash\Omega}\langle\tilde{\nabla}_{\tilde{\nabla}\eta}u,\eta u\rangle d\mu,\nonumber
\end{equation}
which implies that
\begin{equation}
e^{2\kappa t}\int_{M}|\eta u\left(t\right)|^2d\mu\leq \int_{M}|\eta u\left(0\right)|^2d\mu+|e^{2\kappa t}\int_{0}^{t}\int_{M\backslash\Omega}-2\eta\tilde{\Delta}\eta|u|^2-4\langle\tilde{\nabla}_{\tilde{\nabla}\eta}u,\eta u\rangle d\mu ds|.\nonumber
\end{equation}
However,
\begin{equation}\label{remainder}
\begin{split}
&|e^{2\kappa t}\int_{0}^{t}\int_{M\backslash\Omega}-2\eta\tilde{\Delta}\eta|u|^2-4\langle\tilde{\nabla}_{\tilde{\nabla}\eta}u,\eta u\rangle d\mu ds|\\
\leq&2C\left(n\right)e^{2\kappa T}\int_{0}^{t}\int_{M\backslash\Omega}|\eta u|^2d\mu ds+2be^{2\kappa T}\int_{0}^{t}\int_{M\backslash\Omega}|\eta\tilde{\nabla}u|^2+|\eta u|^2d\mu ds\\
\leq&C\left(n,T\right)\left(\int_{0}^{t}\int_{M\backslash\Omega}|u|^2d\mu ds+\int_{0}^{t}\int_{M\backslash\Omega}|\tilde{\nabla}u|^2d\mu ds\right).\\
\end{split}
\end{equation}
By means of (\ref{L2boundofu}) and (\ref{L2boundofnablau}), it immediately follows that the right hand side of (\ref{remainder}) goes to zero as $\Omega$ exhausts to the whole $M.$
Therefor for all $t\in[0,T),$  we obtain that
\begin{equation}
e^{2\kappa t}\int_{M}|u\left(t\right)|^2d\mu\leq \int_{M}|u\left(0\right)|^2d\mu.\nonumber
\end{equation}
Thus we finish to prove this lemma.
\end{proof}

Now we move to the long-time existence and convergence of NRDF.
\begin{proof}[Proof of Theorem \ref{stability}]
Now that we have the exponential decay of the $L^2$-norm of $u$, by the same idea as that in Theorem 3.2, Theorem 3.3 and Theorem 3.4 in \cite{SSS}, we will get exponential decay in the $C^0$-norm of $u$ and finally the long-time existence and convergence. But here we have to check that the injective radius is bounded along NRDF in order that the interior estimates and gradient estimates make sense. WLOG, assume that $T>1.$ As to the injective radius, we only need to check that it has a uniformly lower bound for all $t\in[1,T).$ In fact, the estimates $\|u\|_{C^0\left(M^n,\tilde{g}\right)}\leq2\epsilon,$ together with the fact that $\tilde{g}$ is $C^{2,\alpha}$ comformally compact, implies that there exists some $v=v\left(\tilde{g},\epsilon,n\right)>0$ such that $vol(B_{g\left(t\right)}(p,1))\geq v$ for all $p\in M.$ On the other hand, it follows from the estimates (\ref{smalltimemetricdiff}), (\ref{smalltimeonederivative}) and (\ref{smalltimetwoderivative}) that the sectional curvature of $g\left(t\right)$ has an upper bound for $t\in[1,T).$ Hence the injective radius is bounded uniformly along NRDF for all $t\in[1,T).$ This uniform bound depends only on $\epsilon,$ $n$ and $\tilde{g}.$ Then $|\tilde{\nabla}^ku\left(t\right)|,$ $k\in\mathbb{N},$ decay exponentially by the same arguments in Theorem 3.3 and Theorem 3.4 in \cite{SSS}, therefore the injective radius is bounded along NRDF until time goes to infinity.
\end{proof}

\section{Proof of the Volume Comparison}
As an application of the long-time existence and convergence of the NRDF, we prove Theorem \ref{main1} and Theorem \ref{rigidity}. Now Let $\tilde{g}$ and $g$ be as stated in Theorem \ref{main1} that $\left(M^n, \tilde{g}\right)$ is a $C^{2,\alpha}$ strictly stable conformally compact Einstein manifold and $g$ is another complete noncompact Riemannian metric on $M^n.$ For all $n\geq4$ and $\tau>n-1,$ assume that
$$\|e^{\tau\rho}\left(g-\tilde{g}\right)\|_{C^1\left(M^n,\tilde{g}\right)}\leq \epsilon,$$
and
$$R\left(g\right)\geq-n\left(n-1\right).$$
In order to control the decay rate both in time and at spatial infinity uniformly, we choose $\delta\in(0,\tau]$ and $\gamma$ satisfying
$$\delta\in\left(n-1,\frac{\left(n-1\right)+\sqrt{\left(n-1\right)^2+4\left(n-1\right)}}{2}\right)$$ and $$\gamma\in\left(\frac{n-1}{2}-\sqrt{\frac{\left(n-1\right)^2}{4}-2},\frac{n-1}{2}+\sqrt{\frac{\left(n-1\right)^2}{4}-2}\right).$$ We remark that $\gamma$ makes sense only if $n\geq4.$ Obviously, $$\|e^{\delta\rho}\left(g-\tilde{g}\right)\|_{C^1\left(M^n,\tilde{g}\right)}\leq \epsilon.$$ It follows from Theorem \ref{stability} that NRDF $g\left(t\right)$ starting from $g$ exists globally and converges exponentially to $\tilde{g}.$
And there exists some $\tilde{\epsilon}>0$ which depends only on $\epsilon$, $n$ and $\tilde{g}$ such that
\begin{equation}\label{gclosetogtilde}
\|g\left(t\right)-\tilde{g}\|_{C^0\left(M^n,\tilde{g}\right)}\leq\tilde{\epsilon}
\end{equation}
for all $t\in[0,\infty).$

However, as to the estimates on the derivatives, it is different for the time interval $t\in[0,1]$ and $t\in[1,\infty).$ For all $t\in[0,1],$ the estimates on the derivatives were improved in \cite{Sim} Lemma 2.1, that is, there exist some constants $\epsilon_9$ and $\epsilon_{10}$ which only depend on $\epsilon,$ $n$ and $\tilde{g}$ such that
\begin{equation}\label{smalltimeonederivative1}
\|\tilde{\nabla}g(t)\|_{C^0\left(M^n,\tilde{g}\right)}\leq \epsilon_9,
\end{equation}
and
\begin{equation}\label{smalltimetwoderivative2}
\|\tilde{\nabla}^{2}g(t)\|_{C^0\left(M^n,\tilde{g}\right)}\leq\frac{\epsilon_{10}}{\sqrt{t}}.
\end{equation}
For all $t\in[1,\infty),$ it follows immediately from the long-time existence and convergence of the NRDF that there exist some constants $\sigma_{i}=\sigma_{i}\left(n,\epsilon,\tilde{g}\right)>0$ and $C_{i+2}>0$ such that
\begin{equation}\label{bigtimeonederivative}
\|\tilde{\nabla}^{i}g\left(t\right)\|_{C^0\left(M^n,\tilde{g}\right)}\leq C_{i+2} e^{-\sigma_{i} t},~~~~\text{ }~~~~i=1,2,3.
\end{equation}

As a consequence of Lemma \ref{evolutionequ}, we show the following:

\begin{lem}\label{decayingestimate}
Let $g(t)$, $t\in[0,\infty)$ be a solution to the NRDF (\ref{RicciDeTurckflow}). Then there exist constants $\hat{\epsilon}, \bar{\epsilon}>0$, depending only on $\epsilon$, $n$ and $\tilde{g}$ such that
\begin{enumerate}
  \item for all $t\in[0,1],$ $|g\left(t\right)-\tilde{g}|\leq \hat{\epsilon} e^{-\delta\rho},$ $|\tilde{\nabla}g\left(t\right)|\leq \hat{\epsilon} e^{-\delta\rho},$ $|\tilde{\nabla}^2g|\leq\frac{\hat{\epsilon}}{\sqrt{t}}e^{-\delta\rho};$
  \item for all $t\in[1,\infty),$ $|g\left(t\right)-\tilde{g}|\leq \bar{\epsilon} e^{-\sigma_4 t}e^{-\gamma\rho},$ $|\tilde{\nabla}^{k}g\left(t\right)|\leq \bar{\epsilon} e^{-\sigma_4 t}e^{-\gamma\rho},$ $k=1,2,$ where $\sigma_4>0$ is an arbitrarily small constant.
\end{enumerate}
\end{lem}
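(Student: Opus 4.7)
The strategy is a two-stage maximum principle argument for $u = g(t) - \tilde g$, with exponential barriers calibrated to the asymptotic hyperbolic structure. The regimes $t\in[0,1]$ (propagating the initial spatial decay) and $t\in[1,\infty)$ (propagating joint space-time decay) require different evolution inputs, so I would treat them separately.

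For $t\in[0,1]$, start from Lemma \ref{evolutionequ} and, using the $C^0$-smallness (\ref{gclosetogtilde}) together with Young's inequality to absorb the cubic and quartic gradient terms into quadratic ones, deduce
\[
\tfrac{\p}{\p t}|u|^2 \leq g^{ab}\tilde\nabla_a\tilde\nabla_b|u|^2 + C_1|u|^2
\]
for a uniform $C_1 = C_1(n,\tilde g)$. Compare with the barrier $\psi(x,t) = \epsilon^2 e^{Kt}e^{-2\delta\rho}$. Lemma 2.1 of \cite{HQS} gives $\tilde\Delta\rho = (n-1) + O(e^{-c\rho})$, hence $g^{ab}\tilde\nabla_a\tilde\nabla_b e^{-2\delta\rho} = (4\delta^2 - 2\delta(n-1) + o(1))e^{-2\delta\rho}$, where the $o(1)$ is uniform under the $C^0$-closeness of $g$ to $\tilde g$; choose $K = K(n,\tilde g,\delta,C_1)$ large enough that $\psi$ is a supersolution. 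Since $|u(0)|^2 \leq \epsilon^2 e^{-2\tau\rho} \leq \psi(\cdot,0)$, the maximum principle yields $|u(t)| \leq \hat\epsilon e^{-\delta\rho}$ with $\hat\epsilon = \epsilon e^{K/2}$. The gradient and Hessian estimates for this regime then follow from applying the interior parabolic estimates of \cite{Sim} (Lemma 2.1) on unit $\tilde g$-balls: these bound $|\tilde\nabla g|$ and $\sqrt{t}|\tilde\nabla^2 g|$ by a multiple of the local $L^\infty$ norm of $u$, inheriting the $e^{-\delta\rho}$ factor.

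For $t\in[1,\infty)$, I would use the refined evolution of Lemma \ref{nondegenerateequ}. Since the background is Einstein, $-2\langle\tilde L u,u\rangle = \tilde\Delta|u|^2 - 2|\tilde\nabla u|^2 + 4|u|^2 - 4(\tr u)^2 + (\text{Weyl terms that decay at the boundary})$. Passing to $|u|$ via Kato, using $\tilde\Delta|u|^2 = 2|u|\tilde\Delta|u| + 2|\tilde\nabla|u||^2$ and $|\tilde\nabla|u||^2 \leq |\tilde\nabla u|^2$, the good $-2|\tilde\nabla u|^2$ absorbs $2|\tilde\nabla|u||^2$; after regularizing by $\sqrt{|u|^2 + \epsilon_0}$ and sending $\epsilon_0\to 0$ one arrives at
\[
\tfrac{\p}{\p t}|u| \leq g^{ab}\tilde\nabla_a\tilde\nabla_b|u| + (2 + \epsilon_{11})|u|,
\]
where $\epsilon_{11}$ is small. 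Now take the barrier $\phi(x,t) = B e^{-\sigma_4 t}e^{-\gamma\rho}$. Using $g^{ab}\tilde\nabla_a\tilde\nabla_b e^{-\gamma\rho}/e^{-\gamma\rho} \to \gamma^2 - \gamma(n-1)$, the supersolution inequality becomes $\sigma_4 + 2 + \epsilon_{11} \leq \gamma(n-1) - \gamma^2 + o(1)$, which is solvable with $\sigma_4 > 0$ precisely because the paper's $\gamma$-range is defined by $\gamma^2 - \gamma(n-1) + 2 < 0$ (a condition nonempty exactly for $n\geq 4$). Since $\gamma < n-1 < \delta$, Stage 1 gives $|u(\cdot,1)| \leq \hat\epsilon e^{-\gamma\rho}$, so picking $B$ large yields $\phi(\cdot,1) \geq |u(\cdot,1)|$, and the maximum principle gives $|u(t)| \leq \bar\epsilon e^{-\sigma_4 t}e^{-\gamma\rho}$. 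The first- and second-derivative bounds then follow from interior parabolic estimates on $[t-1,t]\times B_1(p)$ combined with the $C^0$ decay already established, accepting a harmless slight decrease of $\sigma_4$.

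The main obstacles I expect are (i) the maximum principle on a noncompact manifold, where the supremum of $|u|^2/\psi$ or $|u|/\phi$ may fail to be attained; I would handle this by adding a slowly-growing auxiliary perturbation $\eta\,e^{\alpha\rho}$ with $\alpha$ strictly less than the barrier's spatial decay rate, so the competition is won at spatial infinity and the maximum lies in a compact set, and then letting $\eta\to 0$; and (ii) making the Kato reduction from $|u|^2$ to $|u|$ rigorous near the zero set of $u$ while keeping careful book-keeping of the $o(1)$ terms in $g^{ab}\tilde\nabla_a\tilde\nabla_b e^{-\gamma\rho}$ so that strict positivity of $\sigma_4$ is preserved. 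Both steps rely crucially on the asymptotic identity $\tilde\Delta\rho = (n-1) + O(e^{-c\rho})$ from \cite{HQS} and the $C^0$-closeness (\ref{gclosetogtilde}) guaranteed by Theorem \ref{stability}.
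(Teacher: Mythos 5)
Your treatment of the zeroth-order decay is essentially the paper's argument: the same two-stage barrier scheme, the same use of $\tilde{\Delta}\rho=n-1+O(e^{-a\rho})$ from \cite{HQS}, and the same quadratic constraint $\gamma^2-(n-1)\gamma+2<0$ (equivalently the paper's condition $C_8\leq0$ with $c=-\nu$, $\nu=2\gamma$) governing both the admissible $\gamma$-range and the restriction $n\geq4$; your Kato-type reduction to $|u|$ and the paper's computation with $|u|^2$ and the weight $e^{\nu\rho}$ land on the identical inequality. Where you genuinely diverge is in the derivative bounds. The paper does not invoke interior parabolic estimates at all: it folds $|\tilde{\nabla}g|^2$ and $|\tilde{\nabla}^2g|^2$ directly into the barrier quantities $\phi=e^{\mu\rho}(|u|^2+|\tilde{\nabla}g|^2+t|\tilde{\nabla}^2g|^2)$ and $\varphi=e^{\sigma_4 t}e^{\nu\rho}(|u|^2+a|\tilde{\nabla}g|^2+b|\tilde{\nabla}^2g|^2)$, using the good terms $-(2-\epsilon)|\tilde{\nabla}^{k+1}g|^2$ from Lemma \ref{evolutionequ} to absorb the drift and cross terms, and then applies the maximum principle once to the combined function. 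This buys a self-contained argument in which the weighted decay of all three quantities is proved simultaneously. Your route is workable in principle but, as written, leans on a localized derivative estimate that the cited source does not provide: Lemma 2.1 of \cite{Sim} gives the \emph{global} bounds (\ref{smalltimeonederivative1})--(\ref{smalltimetwoderivative2}) in terms of the initial global $C^0$-closeness, not a bound on $B_1(p)$ that is \emph{linear} in the local sup of $|u|$ over $B_2(p)$, which is what you need to inherit the factor $e^{-\delta\rho}$ or $e^{-\gamma\rho}$. You would have to prove such a local linear estimate for the quasilinear Ricci--DeTurck system (doable, but a nontrivial addition), and for $t\in[0,1]$ you must also feed in the hypothesis $\|e^{\tau\rho}(g-\tilde{g})\|_{C^1}\leq\epsilon$ to get the first-derivative bound uniformly down to $t=0$, since any purely interior smoothing estimate degenerates like $t^{-1/2}$ there; the $t$-weight on $|\tilde{\nabla}^2 g|^2$ in the paper's $\phi$ is exactly how it records that only $C^1$ initial decay is assumed. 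Your handling of the noncompact maximum principle by an auxiliary growing perturbation is the standard mechanism behind the Karp--Li and Ecker--Huisken variants the paper cites (add an $e^{\Lambda t}$ factor to the perturbation so it is a supersolution on each finite time interval), so that point is fine.
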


\begin{proof}
For $t\in[0,1],$ let $\phi=e^{\mu\rho}\left(|g-\tilde{g}|^2+|\tilde{\nabla}g|^2+t|\tilde{\nabla}^2g|^2\right)$. It follows from Lemma \ref{evolutionequ} and the estimates (\ref{gclosetogtilde}), (\ref{smalltimeonederivative1}) and (\ref{smalltimetwoderivative2}) that
\begin{equation}
\begin{split}
\frac{\p}{\p t}\phi&\leq g^{ij}\tilde{\nabla}_{i}\tilde{\nabla}_{j}\phi+ \left(C_6-\mu^2 g^{ij} \tilde{\nabla}_{i}\rho\cdot\tilde{\nabla}_{j}\rho-\mu g^{ij}\tilde{\nabla}_{i}\tilde{\nabla}_{j}\rho\right)\phi\\
&-2\mu e^{\mu\rho}g^{ij} \tilde{\nabla}_{i}\rho\cdot\tilde{\nabla}_{j}\left(|g-\tilde{g}|^2+|\tilde{\nabla}g|^2+t|\tilde{\nabla}^2g|^2\right)\\
&-\left(2-\epsilon_{11}\right)e^{\mu\rho}\left(|\tilde{\nabla}g|^2+|\tilde{\nabla}^{2}g|^2+t|\tilde{\nabla}^{3}g|^2\right)+\left(1+\epsilon_{12}\sqrt{t}\right)|\tilde{\nabla}^2g|^2\\
\end{split}\nonumber
\end{equation}
In order to get the spatial decay of the metric, we modify the essential set to be sufficiently large, together with that $\tilde{g}$ is $C^{2,\alpha} $ conformally compact,  by Lemma 2.1 in \cite{HQS}, we have that $$\tilde{\Delta}\rho=n-1+O\left(e^{-a\rho}\right)$$ for some $a>0.$ In the light of the estimates
$$g^{ij}\tilde{\nabla}_{i}\rho\tilde{\nabla}_{j}\rho\geq\left(1-\tilde{\epsilon}\right)|\tilde{\nabla}\rho|^{2}=1-2\epsilon,$$
$$g^{ij}\tilde{\nabla}_{i}\tilde{\nabla}_{j}\rho\geq\left(1-\tilde{\epsilon}\right)\tilde{\Delta}\rho=\left(1-\epsilon_{13}\right)\left(n-1\right),$$
and
\begin{equation}
\begin{split}
&|g^{ij}\tilde{\nabla}_{i}\rho\tilde{\nabla}_{j}|\tilde{\nabla}^{k-1}\left(g-\tilde{g}\right)|^2|\\
\leq  &\left(1+\tilde{\epsilon}\right)|\tilde{\nabla}\langle\tilde{\nabla}^{\left(k-1\right)}\left(g-\tilde{g}\right),\tilde{\nabla}^{\left(k-1\right)}\left(g-\tilde{g}\right)\rangle_{\tilde{g}}|\\
\leq &\left(1+\tilde{\epsilon}\right)\left(b^2|\tilde{\nabla}^{k}\left(g-\tilde{g}\right)|^{2}+\frac{1}{b^2}|\tilde{\nabla}^{k-1}\left(g-\tilde{g}\right)|^{2}\right),\\
\end{split}\nonumber
\end{equation}
where $b>0$ is arbitrary, we can choose appropriate $b$ and get that there exists a constant $C_7>0,$ depending only on $\epsilon$, $\mu,$ $n$ and $\tilde{g}$, such that
\begin{equation}
\frac{\p}{\p t}\phi\leq g^{ij}\tilde{\nabla}_{i}\tilde{\nabla}_{j}\phi+ C_7\phi\nonumber
\end{equation}
Let $\mu=2\delta$, by maximum principle due to Karp and Li (See Theorem 7.39 in \cite{CLN}), we get for $t\in[0,1],$ $$\phi\leq\hat{\epsilon}^2:=\phi(\cdot,0)e^{C_7}.$$
Hence $$|\tilde{\nabla}^2g|\leq\frac{\hat{\epsilon}}{\sqrt{t}}e^{-\delta\rho},$$ for all $t\in[0,1].$

In particular, at $t=1,$
\begin{equation}\label{timeoneestimate}
|g-\tilde{g}|\left(\cdot,1\right), |\tilde{\nabla}g|\left(\cdot,1\right), |\tilde{\nabla}^2g|\left(\cdot,1\right)\leq\hat{\epsilon}e^{-\delta\rho}.
\end{equation}

For $t\in[1,\infty),$ let $\varphi=e^{\sigma_4 t}e^{\nu\rho}\left(|g-\tilde{g}|^2+a|\tilde{\nabla}g|^2+b|\tilde{\nabla}^{2}g|^2\right).$ Due to (\ref{gclosetogtilde}) and (\ref{bigtimeonederivative}), we can choose $a$ and $b$ such that
\begin{equation}
\begin{split}
\frac{\p}{\p t}\varphi&\leq g^{ij}\tilde{\nabla}_{i}\tilde{\nabla}_{j}\varphi+ \left(4+4|\tilde{W}|+\tilde{\epsilon}+\sigma_4-\nu^2 g^{ij} \tilde{\nabla}_{i}\rho\cdot\tilde{\nabla}_{j}\rho-\nu g^{ij}\tilde{\nabla}_{i}\tilde{\nabla}_{j}\rho\right)\varphi\\
&-\left(2-\epsilon_{14}\right)e^{\sigma_4 t}e^{\nu\rho}\left(|\tilde{\nabla}g|^2+a|\tilde{\nabla}^{2}g|^2+b|\tilde{\nabla}^{3}g|^2\right)\\
&-2\nu e^{\sigma_4 t}e^{\nu\rho}g^{ij} \tilde{\nabla}_{i}\rho\cdot\tilde{\nabla}_{j}\left(|g-\tilde{g}|^2+a|\tilde{\nabla}g|^2+b|\tilde{\nabla}^{2}g|^2\right)\\
\end{split}\nonumber
\end{equation}
Let $c+d=-2\nu,$ then
\begin{equation}
\begin{split}
&-2\nu e^{\nu\rho}g^{ij}\tilde{\nabla}_{i}\rho\cdot\tilde{\nabla}_{j}|\tilde{\nabla}^{k}\left(g-\tilde{g}\right)|^2\\
=&\left(c+d\right)e^{\nu\rho}g^{ij}\tilde{\nabla}_{i}\rho\cdot\tilde{\nabla}_{j}|\tilde{\nabla}^{k}\left(g-\tilde{g}\right)|^2\\
=&ce^{\nu\rho}g^{ij}\tilde{\nabla}_{i}\rho\cdot\tilde{\nabla}_{j}|\tilde{\nabla}^{k}\left(g-\tilde{g}\right)|^2+de^{\nu\rho}g^{ij}\tilde{\nabla}_{i}\rho\cdot\tilde{\nabla}_{j}|\tilde{\nabla}^{k}\left(g-\tilde{g}\right)|^2\\
\leq&|c|\left(1+\tilde{\epsilon}\right)\left(m^2e^{\nu\rho}|\tilde{\nabla}^{k+1}\left(g-\tilde{g}\right)|^2+\frac{1}{m^2}e^{\nu\rho}|\tilde{\nabla}^{k}\left(g-\tilde{g}\right)|^2\right)\\
&+\left(-b\nu g^{ij}\tilde{\nabla}_{i}\rho\cdot\tilde{\nabla}_{j}\rho\left(e^{\nu\rho}|\tilde{\nabla}^{k}\left(g-\tilde{g}\right)|^2\right)+dg^{ij}\tilde{\nabla}_{i}\rho\cdot\tilde{\nabla}_{j}\left(e^{\nu\rho}|\tilde{\nabla}^{k}\left(g-\tilde{g}\right)|^2\right)\right)\\
=&|c|\left(1+\tilde{\epsilon}\right)m^2e^{\nu\rho}|\tilde{\nabla}^{k+1}\left(g-\tilde{g}\right)|^2+dg^{ij}\tilde{\nabla}_{i}\rho\cdot\tilde{\nabla}_{j}\left(e^{\nu\rho}|\tilde{\nabla}^{k}\left(g-\tilde{g}\right)|^2\right)\\
&+\left(\frac{|c|\left(1+\tilde{\epsilon}\right)}{m^2}-d\nu g^{ij}\tilde{\nabla}_{i}\rho\cdot\tilde{\nabla}_{j}\rho\right)\left(e^{\nu\rho}|\tilde{\nabla}^{k}\left(g-\tilde{g}\right)|^2\right).\\
\end{split}\nonumber
\end{equation}
Let $|c|\left(1+\tilde{\epsilon}\right)m^2=2-\epsilon_{14},$ then we have
\begin{equation}\label{varphi}
\begin{split}
\frac{\p}{\p t}\varphi & \leq g^{ij}\tilde{\nabla}_{i}\tilde{\nabla}_{j}\varphi+\left(-2\nu-c\right)g^{ij}\tilde{\nabla}_{i}\rho\cdot\tilde{\nabla}_{j}\varphi\\
&\left(4+4|\tilde{W}|+\tilde{\epsilon}+\sigma_4+\frac{\left(1+\tilde{\epsilon}\right)^2c^2}{2-\epsilon_{14}}+\left(\nu^2+c\nu\right) g^{ij} \tilde{\nabla}_{i}\rho\cdot\tilde{\nabla}_{j}\rho-\nu g^{ij}\tilde{\nabla}_{i}\tilde{\nabla}_{j}\rho\right)\varphi\\
&\leq g^{ij}\tilde{\nabla}_{i}\tilde{\nabla}_{j}\varphi+\left(-2\nu-c\right)g^{ij}\tilde{\nabla}_{i}\rho\cdot\tilde{\nabla}_{j}\varphi+C_8\varphi\\
\end{split}
\end{equation}
where
\begin{equation}
C_8=4+4l+\tilde{\epsilon}+\sigma_4+\frac{\left(1+\tilde{\epsilon}\right)^2c^2}{2-\epsilon_{14}}+\left(\nu^2+c\nu\right)\left(1+\tilde{\epsilon}\right)-\nu\left(n-1\right)\left(1-\tilde{\epsilon}\right).\nonumber
\end{equation}
For the purpose of getting the decay at spatial infinity, we choose a sufficiently large essential set. Then outside the essential set, we see that $|\tilde{W}|\leq l$ and the constant $l>0$ can be arbitrarily small. As $\tilde{\epsilon}$ and $\sigma_4$ also can be sufficiently small, we choose $c=-\nu$ and $\nu=2\gamma,$ then $C_8\leq0.$
Since we have estimates (\ref{timeoneestimate}) at $t=1,$ $$\varphi(\cdot,1)=e^{\sigma_4}e^{2\gamma\rho}\left(|g-\tilde{g}|^2+a|\tilde{\nabla}g|^2+b|\tilde{\nabla}^{2}g|^2\right)\leq e^{\sigma_4}\hat{\epsilon}^2,$$ it follows from the evolution equation (\ref{varphi}) and maximum principle that
\begin{equation}
\varphi\left(\cdot,t\right)\leq \bar{\epsilon}^2=\frac{e^{\sigma_4}\hat{\epsilon}^2}{\min\{a,b,1\}}\nonumber
\end{equation}
for $t\in[1,\infty).$ Note that for our purpose we need a variant of the maximum principle in Theorem 4.2 in \cite{QSW}, where Qing, Shi and Wu proved a variant of the maximum principle in Theorem 4.3 in \cite{EH}, originally from \cite{LT}. The proof goes the same as that in \cite{EH} Theorem 4.3 and \cite{QSW} Theorem 4.2 if we change the time-dependent laplacian $\Delta_{g\left(t\right)}$ there to be $g^{ij}\tilde{\nabla}_i\tilde{\nabla}_j$ in our case.

\end{proof}

\begin{lem}\label{decayofV}
Under the NRDF, for all $t\in[0,\infty),$ there exists some constant $\epsilon'>0$ depending only on $\epsilon$, $n$ and $\tilde{g},$ such that the $1$-form $V$ satisfies
\begin{equation}
|V\left(t\right)|\leq \epsilon'e^{-\tilde{\sigma} t} e^{-\delta\rho},\nonumber
\end{equation}
where $\tilde{\sigma}>0$ is an arbitrarily small constant.
\end{lem}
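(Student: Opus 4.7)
\emph{Proof proposal.} The plan is to split the analysis at $t=1$: the short-time interval $[0,1]$ follows immediately from the explicit expression of $V$ as a first derivative of $u = g-\tilde g$ together with Lemma \ref{decayingestimate}(1), while on the long-time interval $[1,\infty)$ I would derive a parabolic equation for $V$ itself and apply the weighted maximum principle used in the proof of Lemma \ref{decayingestimate}(2) (cf.\ \cite{QSW,EH}).

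First, using the standard identity $\Gamma_{pq}^k - \tilde\Gamma_{pq}^k = \tfrac{1}{2}g^{kl}(\tilde\nabla_p g_{ql}+\tilde\nabla_q g_{pl}-\tilde\nabla_l g_{pq})$ together with $\tilde\nabla\tilde g = 0$, one rewrites
$$V_j = g^{pq}\Bigl(\tilde\nabla_p u_{qj} - \tfrac{1}{2}\tilde\nabla_j u_{pq}\Bigr),$$
so $|V| \leq C |\tilde\nabla g|$ by (\ref{gclosetogtilde}). For $t \in [0,1]$, Lemma \ref{decayingestimate}(1) gives $|\tilde\nabla g(t)| \leq \hat\epsilon e^{-\delta\rho}$, hence $|V(t)| \leq C\hat\epsilon e^{-\delta\rho} \leq \epsilon' e^{-\tilde\sigma t}e^{-\delta\rho}$ after absorbing the bounded factor $e^{\tilde\sigma}$ into $\epsilon'$; in particular $|V(\cdot,1)| \leq \epsilon' e^{-\delta\rho}$, which serves as initial data for the long-time analysis.

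For $t \in [1,\infty)$, I would differentiate the explicit expression for $V_j$ in time using the NRDF equation $\dot g_{ij}=-2(R_{ij}+(n-1)g_{ij})+\nabla_i V_j+\nabla_j V_i$ and commute $\tilde\nabla$ with $\partial_t$. Because $\tilde g$ is Einstein with $\tilde R_{ij}=-(n-1)\tilde g_{ij}$, the third-order terms in $\tilde\nabla g$ reorganize into $g^{ab}\tilde\nabla_a\tilde\nabla_b V_j$ (the same Bianchi-type cancellation that makes DeTurck's trick parabolic), and one obtains an equation of the schematic form
$$\partial_t V_j = g^{ab}\tilde\nabla_a\tilde\nabla_b V_j - (n-1) V_j + A\ast\tilde\nabla V + B\ast V + E_j,$$
where $A,B$ are pointwise bounded tensors built from $g,g^{-1},\tilde g,\tilde R,\tilde\nabla g$, and $E_j$ is a lower-order source involving $u,\tilde\nabla u,\tilde\nabla^2 u$ contracted against coefficients carrying $\tilde W$ (which decays at conformal infinity). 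Setting $\Psi = e^{2\tilde\sigma t}e^{2\delta\rho}|V|^2$, squaring and using $\tilde\Delta\rho = n-1+O(e^{-a\rho})$ from Lemma 2.1 of \cite{HQS}, a direct computation shows that outside a sufficiently enlarged essential set the effective zeroth-order coefficient of $\Psi$ is controlled by $2\tilde\sigma - 2(n-1) + 2\delta^2(1+\tilde\epsilon) - 2\delta(n-1)(1-\tilde\epsilon) + \text{small}$, which becomes strictly negative precisely when $\delta^2 - (n-1)\delta - (n-1)<0$, i.e.\ in the admissible range $\delta < (n-1)/2 + \sqrt{(n-1)^2/4+(n-1)}$. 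The max-principle variant from \cite{QSW} then bounds $\Psi$ by its value at $t=1$, yielding the stated estimate.

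The main obstacle is controlling the source $E_j$ with the required $\delta$-decay: on $[1,\infty)$ the tensors $u, \tilde\nabla u, \tilde\nabla^2 u$ only satisfy $e^{-\gamma\rho}$ decay with $\gamma < \delta$, so a naive pointwise bound on $E$ is insufficient. I would overcome this by exploiting the algebraic structure of $E$ -- any parts proportional to $V$ (via $V \sim g^{-1}\tilde\nabla u$) get absorbed into the $B$-coefficient, while the remaining terms carry Weyl-tensor coefficients whose own spatial decay supplies the missing rate, ensuring $e^{2\delta\rho}|E|^2$ stays bounded uniformly in time.
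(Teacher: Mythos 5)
Your overall strategy coincides with the paper's: get $|V(\cdot,1)|\leq \epsilon_{16}e^{-\delta\rho}$ from the explicit formula $V_j=g^{pq}(\tilde\nabla_pu_{qj}-\tfrac12\tilde\nabla_ju_{pq})$ and Lemma \ref{decayingestimate}(1), then derive a heat-type inequality for $V$ on $[1,\infty)$ whose main part is $\Delta V_j+R^k_jV_k\approx \Delta V_j-(n-1)V_j$ (the paper gets this from $\partial_t\Gamma$ plus the contracted Bianchi identity), and close with the weighted maximum principle of \cite{QSW}; your sign condition $\delta^2-(n-1)\delta-(n-1)<0$ is exactly the paper's positivity of $B$. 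So the skeleton is right.

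The genuine gap is in your treatment of the inhomogeneous source, which you correctly flag as the main obstacle but then resolve incorrectly. The source is $\mathbf{J}=\partial_t(g_{jk}g^{pq})\,(\Gamma^k_{pq}-\tilde\Gamma^k_{pq})$; it carries no Weyl-tensor coefficient, and the decay of $\tilde W$ plays no role here (it enters Lemma \ref{decayingestimate}, not this lemma). What saves the estimate is that $\mathbf{J}$ is \emph{quadratic}: it is a product of $\partial_tg$ (bounded by $|\tilde\nabla^2g|+|\tilde\nabla g|+|u|$) and $\Gamma-\tilde\Gamma$ (bounded by $|\tilde\nabla g|+|\tilde\nabla g|\,|u|$), each factor decaying like $e^{-\sigma_4t}e^{-\gamma\rho}$ by Lemma \ref{decayingestimate}(2), so $|\mathbf{J}|\leq C\bar\epsilon^2e^{-2\sigma_4t}e^{-2\gamma\rho}$. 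The paper then \emph{chooses} $2\gamma=\delta$ (this is precisely why $\gamma$ was introduced in the range requiring $n\geq4$), so the source has exactly the weight $e^{-\delta\rho}$ and the desired time decay, and the weighted quantity $e^{\delta\rho}|V|$ satisfies an ODE comparison that closes. Your proposed mechanism --- absorbing $V$-proportional parts into the zeroth-order coefficient and relying on Weyl decay for the rest --- does not account for terms such as $\tilde\nabla^2g\ast\tilde\nabla g$ in $\mathbf{J}$, which are neither proportional to $V$ nor Weyl-weighted; without invoking the quadratic structure and the relation $2\gamma=\delta$, the bound $e^{2\delta\rho}|E|^2=O(1)$ you need does not follow.
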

\begin{proof}
Let $E=Ric+\left(n-1\right)g,$ we consider the evolution equation of $V$ under NRDF,
\begin{equation}
\begin{split}
\frac{\p}{\p t}V_{j}&=\frac{\p}{\p t}[g_{jk}g^{pq}\left(\Gamma_{pq}^{k}-\tilde{\Gamma}_{pq}^{k}\right)]\\
&=g_{jk}g^{pq}\frac{\p}{\p t}\Gamma_{pq}^{k}+\frac{\p}{\p t}\left(g_{jk}g^{pq}\right)\left(\Gamma_{pq}^{k}-\tilde{\Gamma}_{pq}^{k}\right)\\
&=\mathbf{I}+\mathbf{J}\\
\end{split}\nonumber
\end{equation}
where
\begin{equation}
\begin{split}
\mathbf{I}=&\frac{1}{2}g^{kl}g_{jk}g^{pq}\left(\nabla_{p}\left(-2E_{lq}+\nabla_{l}V_{q}+\nabla_{q}V_{l}\right)+\nabla_{q}\left(-2E_{lp}+\nabla_{l}V_{p}+\nabla_{p}V_{l}\right)\right)\\
&-\frac{1}{2}g^{kl}g_{jk}g^{pq}\left(\nabla_{l}\left(-2E_{pq}+\nabla_{q}V_{p}+\nabla_{p}V_{q}\right)\right)\\
=&-g^{pq}\left(\nabla_{p}R_{jq}+\nabla_{q}R_{jp}-\nabla_{j}R_{pq}\right)+\Delta V_{j}\\
&+\frac{1}{2}g^{pq}\left(\nabla_{p}\nabla_{j}V_{q}-\nabla_{j}\nabla_{p}V_{q}+\nabla_{q}\nabla_{j}V_{p}-\nabla_{j}\nabla_{q}V_{p}\right)\\
=&-\left(2\left(div_{g} Ric\right)_{j}-\nabla_{j}R\right)+\Delta V_{j}+R_{j}^{k}V_{k}\\
=&\Delta V_{j}+R_{j}^{k}V_{k}\\
\end{split}\nonumber
\end{equation}
and
\begin{equation}
\begin{split}
\mathbf{J}=&\frac{\p}{\p t}\left(g_{jk}g^{pq}\right)\left(\Gamma_{pq}^{k}-\tilde{\Gamma}_{pq}^{k}\right)\\
=&\left(\frac{\p}{\p t}g_{jk}\cdot g^{pq}+g_{jk}\cdot\frac{\p}{\p t}g^{pq}\right)\left(\Gamma_{pq}^{k}-\tilde{\Gamma}_{pq}^{k}\right).\\
\end{split}\nonumber
\end{equation}
In light of
\begin{equation}
\begin{split}
&\Gamma_{pq}^{k}-\tilde{\Gamma}_{pq}^{k}\\
=&\frac{1}{2}g^{kl}\left(g_{li,j}+g_{lj,i}-g_{ij,l}\right)-\frac{1}{2}\tilde{g}^{kl}\left(\tilde{g}_{li,j}+\tilde{g}_{lj,i}-\tilde{g}_{ij,l}\right)\\
=&\frac{1}{2}\left(g^{kl}-\tilde{g}^{kl}\right)\left(g_{li,j}+g_{lj,i}-g_{ij,l}\right)-\frac{1}{2}\tilde{g}^{kl}\left(u_{li,j}+u_{lj,i}-u_{ij,l}\right)\\
=&\frac{1}{2}\left(g^{kl}-\tilde{g}^{kl}\right)\left(g_{li,j}+g_{lj,i}-g_{ij,l}\right)-\frac{1}{2}\tilde{g}^{kl}\left(\left(\tilde{\nabla}_jg\right)_{li}+\left(\tilde{\nabla}_ig\right)_{lj}-\left(\tilde{\nabla}_lg\right)_{ij}+2\tilde{\Gamma}_{ij}^au_{al}\right),\\
\end{split}\nonumber
\end{equation}
we have
\begin{equation}
|\Gamma_{pq}^{k}-\tilde{\Gamma}_{pq}^{k}|\left(x\right)\leq C_9\|\tilde{\nabla}g\left(t\right)\|_{C^0\left(M^n,\tilde{g}\right)}\cdot|u\left(t\right)|\left(x\right)+C_9|\tilde{\nabla}g\left(t\right)|\left(x\right).\nonumber
\end{equation}
According to the evolution equation of $u$ in Lemma \ref{evolutionequ}, we have
\begin{equation}
|\frac{\p}{\p t}g_{jk}|\left(x\right)\leq C_{10}\left(|\tilde{\nabla}^2g\left(t\right)|\left(x\right)+|\tilde{\nabla}g\left(t\right)|\left(x\right)+|u\left(t\right)|\left(x\right)\right).\nonumber
\end{equation}
It follows from Lemma \ref{decayingestimate} that for all $t\in[1,\infty),$
\begin{equation}
\mathbf{J}\leq C_{11}\bar{\epsilon}^2e^{-2\sigma_4t} e^{-2\gamma\rho}.\nonumber
\end{equation}
On the other hand, for all $t\in[1,\infty),$ $$|R_{j}^{k}+\left(n-1\right)\delta_{j}^{k}|=|R_{j}^{k}-\tilde{R}_{j}^{k}|\leq C_{12}\|g-\tilde{g}\|_{C^2\left(M^n,\tilde{g}\right)}\leq\epsilon_{15}.$$
Hence we have
\begin{equation}
\frac{\p}{\p t}|V|\leq \Delta |V|-\left(n-1-\epsilon_{15}\right)|V|+C_{11}\bar{\epsilon}^2e^{-2\sigma_4 t}e^{-2\gamma\rho}\nonumber
\end{equation}
where $\Delta$ is with respect to $g.$
At $t=1,$ by the above formula of $\Gamma_{pq}^{k}-\tilde{\Gamma}_{pq}^{k}$ again, we have $$|V\left(\cdot,1\right)|\leq \epsilon_{16} e^{-\delta\rho}.$$
Let $2\gamma=\delta$ and $v=e^{\delta\rho}|V|,$ we can see that $v$ satisfies
\begin{equation}\label{v}
\begin{split}
\frac{\p}{\p t}v\leq&\Delta v-2\delta\nabla\rho\cdot\nabla v-\left(\delta\Delta\rho+n-1-\epsilon_{15}-\delta^2|\nabla\rho|^2\right)v+C_{11}\bar{\epsilon}^2e^{-2\sigma_4 t}\\
\leq&\Delta v-2\delta\nabla\rho\cdot\nabla v-Bv+C_{11}\bar{\epsilon}^2e^{-2\sigma_4 t}\\
\end{split}\nonumber
\end{equation}
where $B=\delta\Delta\rho+n-1-\epsilon_{15}-\delta^2|\nabla\rho|^2$ is a positive constant provided that $\delta$ belongs to certain range as discussed before. Choose $B\neq 2\sigma_4$ and consider ODE
\begin{equation}\label{ODE}
\left\{
  \begin{array}{ll}
    \frac{du}{dt}=-Bu+C_{11}\bar{\epsilon}^2e^{-2\sigma_4 t}, t\in[1,\infty),\\
    u\left(1\right)=\epsilon_{16}.
  \end{array}
\right.
\end{equation}
the solution
\begin{equation}
u\left(t\right)=\epsilon_{16}e^{B}e^{-Bt}+\frac{C_{11}\bar{\epsilon}^2}{B-2\sigma_4}\left(e^{-2\sigma_4 t}-e^{B-2\sigma_4}e^{-Bt}\right).\nonumber
\end{equation}

Since $u$ is a subsolution to the equation (\ref{v}) with $v(\cdot,1)\leq u(1)$, due to Theorem 4.2 in \cite{QSW}, we have
$v\left(\cdot,t\right)\leq u\left(t\right)$ for all $t\in[1,\infty).$ Hence
\begin{equation}
|V\left(t\right)|\leq \epsilon_{17}e^{-\tilde{\sigma} t} e^{-\delta\rho}\nonumber
\end{equation}
for all $t\in[1,\infty),$ where $\tilde{\sigma}=\min\{2\sigma_4, B\}$ and $\epsilon_{17}$ depends only on $\epsilon$, $n$ and $\tilde{g}.$ Together with Lemma \ref{decayingestimate}, we conclude that for all $t\in[0,\infty),$
\begin{equation}
|V\left(t\right)|\leq \epsilon'e^{-\tilde{\sigma} t} e^{-\delta\rho}.\nonumber
\end{equation}

\end{proof}

\begin{lem}\label{decayofscalarcurvature}
Under the NRDF, for all $t\in[0,\infty),$ the scalar curvature $R$ satisfies
$$R\left(t\right)\geq-n\left(n-1\right).$$
Moreover, there exists some constant $\epsilon''>0$ depending only on $\epsilon$, $n$ and $\tilde{g},$ such that
$$|R\left(t\right)+n(n-1)|\leq \left(\frac{\epsilon''}{\sqrt{t}}+\epsilon''\right)e^{-\delta \rho},~~~\text{for~all}~~~t\in[0,1],$$
and
$$|R\left(t\right)+n(n-1)|\leq \epsilon''e^{-\bar{\sigma}t}e^{-\delta \rho},~~~\text{for~all}~~~t\in[1,\infty),$$
where $\bar{\sigma}>0$ is an arbitrarily small constant.
\end{lem}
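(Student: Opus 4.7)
The plan is to split the argument into the intervals $[0,1]$ and $[1,\infty)$, deriving first an evolution equation for $F:=R+n(n-1)$ under the NRDF and then applying different techniques on each piece. A direct calculation, based on the standard formula $\partial_tR=\Delta R+2|\mathrm{Ric}|^2+2(n-1)R$ for the NRF together with the extra Lie-derivative term along $V$ coming from the DeTurck modification, and the identity $|\mathrm{Ric}|^2=n(n-1)^2-2(n-1)F+|E|^2$ with $E:=\mathrm{Ric}+(n-1)g$, yields
\[
\frac{\partial}{\partial t}F=\Delta F+\langle V,\nabla F\rangle-2(n-1)F+2|E|^2.
\]
Since $|E|^2\geq 0$, the transformation $\tilde F:=e^{2(n-1)t}F$ gives $\partial_t\tilde F\geq \Delta\tilde F+\langle V,\nabla\tilde F\rangle$, and the variant maximum principle \cite{QSW} Theorem 4.2 (whose growth hypothesis is satisfied thanks to the $C^2$ bounds on $u$ in Lemma \ref{decayingestimate}) yields $F(\cdot,t)\geq 0$, i.e.\ $R(t)\geq -n(n-1)$.

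For $t\in[0,1]$ one estimates $F$ by expanding the scalar curvature of $g=\tilde g+u$ around $\tilde g$. Using $R_{\tilde g}=-n(n-1)$ and $\mathrm{Ric}_{\tilde g}=-(n-1)\tilde g$, the expansion has the form $F=-\tilde\Delta(\tr_{\tilde g}u)+\tilde{\mathrm{div}}\,\tilde{\mathrm{div}}\,u+(n-1)\tr_{\tilde g}u+Q(u,\tilde\nabla u,\tilde\nabla^2u)$, with $Q$ quadratic in its arguments. The pointwise estimates of Lemma \ref{decayingestimate} for $t\in[0,1]$ bound the second-derivative contribution by $C\hat\epsilon\,t^{-1/2}e^{-\delta\rho}$ and the remaining linear and quadratic terms by $C\hat\epsilon\,e^{-\delta\rho}$, giving the desired bound $|F|\leq(\epsilon''/\sqrt{t}+\epsilon'')e^{-\delta\rho}$; in particular $|F(\cdot,1)|\leq \epsilon''e^{-\delta\rho}$, which supplies the initial datum for the next step.

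For $t\in[1,\infty)$ I apply a weighted maximum principle in the spirit of Lemma \ref{decayofV}. Setting $W:=e^{\delta\rho}F$ and substituting $e^{\delta\rho}\nabla F=\nabla W-\delta W\nabla\rho$ into the $F$-equation, one obtains
\[
\partial_tW\leq\Delta W-2\delta\langle\nabla\rho,\nabla W\rangle+\langle V,\nabla W\rangle-BW+2e^{\delta\rho}|E|^2,
\]
where, using $\tilde\Delta\rho=n-1+O(e^{-a\rho})$ from \cite{HQS} Lemma 2.1 and the decay of $V$ from Lemma \ref{decayofV}, the coefficient $B=2(n-1)+\delta\tilde\Delta\rho+\delta\langle V,\nabla\rho\rangle-\delta^2|\nabla\rho|^2$ tends to $2(n-1)+\delta(n-1)-\delta^2$ outside a large essential set. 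This limit is strictly larger than $n-1$ by the choice $\delta\in(n-1,\tfrac{(n-1)+\sqrt{(n-1)^2+4(n-1)}}{2})$, which makes $\delta^2-\delta(n-1)-(n-1)<0$. Meanwhile, by Lemma \ref{decayingestimate} and the compatibility $2\gamma\geq\delta$ (which can be arranged because the upper endpoint of the $2\gamma$-range exceeds the upper endpoint of the $\delta$-range), the source satisfies $2e^{\delta\rho}|E|^2\leq Ce^{-2\sigma_4 t}$. Comparing $W$ to the solution of the ODE $u'=-Bu+Ce^{-2\sigma_4 t}$ with initial value supplied by the $[0,1]$-analysis, and applying \cite{QSW} Theorem 4.2 once more, one concludes $W(\cdot,t)\leq \epsilon''e^{-\bar\sigma t}$ with $\bar\sigma=\min(B,2\sigma_4)$, hence the claimed bound.

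The main technical obstacle is handling the nonnegative source $2|E|^2$: linear arguments alone do not yield exponential decay, and one must verify that the spatial decay of $|E|$ provided by Lemma \ref{decayingestimate} is compatible with the exponential weight $e^{\delta\rho}$ in the maximum principle. The relation $2\gamma\geq\delta$, enforced by the joint admissible ranges of $\delta$ and $\gamma$ introduced at the start of Section 3, is precisely what ensures that the weighted source $e^{\delta\rho}|E|^2$ stays uniformly bounded in $\rho$, so that the parabolic inequality for $W$ reduces to comparison with an ODE.
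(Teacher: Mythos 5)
Your proposal is correct, but it takes a genuinely different route from the paper for the two maximum-principle steps. The paper never works with the scalar curvature equation of the NRDF directly: for the lower bound $R\geq -n(n-1)$ and for the large-time decay it passes to the pulled-back NRF solution $\bar g(t)=\Phi_t^{\ast}g(t)$, where the clean reaction-diffusion equation $\partial_t\bar S=\Delta\bar S+2|\bar Ric+(n-1)\bar g|^2-2(n-1)\bar S$ holds with no drift, runs the weighted maximum principle there, and then transfers the estimate back to $g(t)$ using the displacement bound $d_{\tilde g}(\Phi_t(x),x)\leq\epsilon_{22}e^{-\delta\rho}$ (obtained by integrating $|V|$ along the flow of $-V$) together with the uniform bound on $\tilde\nabla R$. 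You instead keep the DeTurck drift $\langle V,\nabla F\rangle$ in the equation for $F=R+n(n-1)$ and apply the maximum principle to the drifted operator directly; your evolution equation $\partial_tF=\Delta F+\langle V,\nabla F\rangle-2(n-1)F+2|E|^2$ checks out, and since $|V|$ is uniformly bounded and decaying by Lemma \ref{decayofV}, absorbing this first-order term into the Ecker--Huisken/Qing--Shi--Wu variant of the maximum principle is unproblematic (the paper already applies that principle to operators of the form $g^{ij}\tilde\nabla_i\tilde\nabla_j+(\text{bounded})\cdot\tilde\nabla$ in Lemma \ref{decayingestimate}). What your route buys is the elimination of the transfer step and of the estimate on $\Phi_t$; what the paper's route buys is a drift-free equation and the reuse of the $\Phi_t$-displacement estimate, which it needs anyway in the rigidity argument. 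Your treatment of the interval $[0,1]$ (expanding $R(g)-R(\tilde g)$ to first order in $u$ plus quadratic remainder and feeding in Lemma \ref{decayingestimate}, with the $t^{-1/2}$ coming from the second derivatives) coincides with the paper's, as does the weighted ODE comparison on $[1,\infty)$ with $B\approx 2(n-1)+\delta(n-1)-\delta^2>0$ and the compatibility $2\gamma\geq\delta$; the paper simply fixes $2\gamma=\delta$.
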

\begin{proof}

By a direct computation, we see that under the NRF of $\bar{g}$, the scalar curvature $\bar{R}=R(\bar{g}(t))$ satisfies the following evolution equation
\begin{equation}\label{evolutionscalarcurvature}
\frac{\p}{\p t}\bar{R}=\Delta \bar{R}+ 2|\bar{R}ic+(n-1)\bar{g}|_{\bar{g}}^2 -2(n-1)(\bar{R}+n(n-1)),\nonumber
\end{equation}
where $\Delta$ and $|\cdot|_{\bar{g}}$ are with respect to the solution of  NRF $\bar{g}\left(t\right).$
Let $\bar{S}=\bar{R}+n(n-1),$ under $NRF,$ it satisfies
\begin{equation}\label{evolutionscalarcurvature}
\frac{\p}{\p t}\bar{S}=\Delta \bar{S}+ 2|\bar{R}ic+(n-1)\bar{g}|_{\bar{g}}^2 -2(n-1)\bar{S},\nonumber
\end{equation}
with $\bar{S}(0)\geq0$. Then by maximum principle, we see that
$$\bar{S}(t)\geq0,$$
which means
$$
R\left(\bar{g}\left(t\right)\right)\geq -n(n-1).
$$
Because of the diffeomorphism invariance, we have
$$
R\left(g\left(t\right)\right)=R\left(\Phi^{\ast}_{t}g\left(t\right)\right)=R\left(\bar{g}\left(t\right)\right)\geq -n(n-1).
$$
To obtain the decay both in time and at spatial infinity, we rewrite the formula of scalar curvature to be
\begin{equation}
\begin{split}
&R\left(t\right)+n\left(n-1\right)\\
=&R\left(t\right)-\tilde{R}\\
=&\left(g^{ab}-\tilde{g}^{ab}\right)\left(\Gamma_{ab,c}^{c}-\Gamma_{ac,b}^c+\Gamma_{ab}^{d}\Gamma_{cd}^c-\Gamma_{ac}^d\Gamma_{bd}^c\right)\\
&+\tilde{g}^{ab}\left(\Gamma_{ab,c}^{c}-\Gamma_{ac,b}^c+\Gamma_{ab}^{d}\Gamma_{cd}^c-\Gamma_{ac}^d\Gamma_{bd}^c-\tilde{\Gamma}_{ab,c}^{c}+\tilde{\Gamma}_{ac,b}^c-\tilde{\Gamma}_{ab}^{d}\tilde{\Gamma}_{cd}^c+\tilde{\Gamma}_{ac}^d\tilde{\Gamma}_{bd}^c\right).\\
\end{split}\nonumber
\end{equation}
Now we get that
\begin{equation}\label{scalar1}
\begin{split}
&|\left(g^{ab}-\tilde{g}^{ab}\right)\left(\Gamma_{ab,c}^{c}-\Gamma_{ac,b}^c+\Gamma_{ab}^{d}\Gamma_{cd}^c-\Gamma_{ac}^d\Gamma_{bd}^c\right)|\left(x\right)\\
\leq&C_{14}\left(\|g\|_{C^2\left(M^n,\tilde{g}\right)}+\|g\|_{C^1\left(M^n,\tilde{g}\right)}\cdot\|g\|_{C^1\left(M^n,\tilde{g}\right)}\right)|u|\left(x\right).\\
\end{split}
\end{equation}
It follows from the formula of $\Gamma_{ab}^c-\tilde{\Gamma}_{ab}^c$ in Lemma \ref{decayofV} that
\begin{equation}\label{scalar2}
\begin{split}
&|\Gamma_{ab,c}^{d}-\tilde{\Gamma}_{ab,c}^{d}|\left(x\right)\\
=&|\tilde{\nabla}_{c}\left(\Gamma_{ab}^d-\tilde{\Gamma}_{ab}^d\right)|\left(x\right)\\
\leq&C_{15}\left(\|g\|_{C^1\left(M^n,\tilde{g}\right)}\cdot|\tilde{\nabla}u|\left(x\right)+\|g\|_{C^2\left(M^n,\tilde{g}\right)}\cdot|u|\left(x\right)+|\tilde{\nabla}^2g|\left(x\right)+|\tilde{\nabla}u|\left(x\right)\right)\\
\end{split}
\end{equation}
and
\begin{equation}\label{scalar3}
\begin{split}
&|\Gamma_{ab}^{c}\Gamma_{pq}^d-\tilde{\Gamma}_{ab}^{c}\tilde{\Gamma}_{pq}^d|\left(x\right)\\
=&|\Gamma_{ab}^{c}\Gamma_{pq}^d-\tilde{\Gamma}_{ab}^{c}\Gamma_{pq}^d+\tilde{\Gamma}_{ab}^{c}\Gamma_{pq}^d-\tilde{\Gamma}_{ab}^{c}\tilde{\Gamma}_{pq}^d|\left(x\right)\\
\leq&|\left(\Gamma_{ab}^{c}-\tilde{\Gamma}_{ab}^{c}\right)\Gamma_{pq}^d|\left(x\right)+|\tilde{\Gamma}_{ab}^{c}\left(\Gamma_{pq}^d-\tilde{\Gamma}_{pq}^d\right)|\left(x\right)\\
\leq&C_{16}\|g\|_{C^1\left(M^n,\tilde{g}\right)}\cdot|\Gamma_{ab}^{c}-\tilde{\Gamma}_{ab}^{c}|\left(x\right)+C_{16}|\Gamma_{pq}^d-\tilde{\Gamma}_{pq}^d|\left(x\right)\\
\leq&C_{17}\left(\|g\|_{C^1\left(M^n,\tilde{g}\right)}+1\right)\left(\|g\|_{C^1\left(M^n,\tilde{g}\right)}\cdot|u|\left(x\right)+|\tilde{\nabla}g|\left(x\right)\right)\\
\end{split}
\end{equation}
Together with (\ref{scalar1}), (\ref{scalar2}), (\ref{scalar3}) and Lemma \ref{decayingestimate}, we have that
\begin{equation}
R\left(t\right)+n\left(n-1\right)\leq\left(\epsilon_{18}+\frac{\epsilon_{18}}{\sqrt{t}}\right)e^{-\delta\rho},~~~\text{for}~~~t\in[0,1].\nonumber
\end{equation}
In particular, at $t=1,$
\begin{equation}
\bar{R}\left(\cdot,1\right)+n\left(n-1\right)\leq2\epsilon_{18}e^{-\delta\rho}.\nonumber
\end{equation}

Meanwhile, due to (\ref{scalar2}), (\ref{scalar3}), (\ref{gclosetogtilde}) and the diffeomorphism invariance
\begin{equation}
|\bar{R}ic+(n-1)\bar{g}|_{\bar{g}}=|Ric\left(\Phi^{\ast}_{t}g\right)+(n-1)\Phi^{\ast}_{t}g|_{\Phi^{\ast}_{t}g}=|Ric\left(g\left(t\right)\right)+(n-1)g\left(t\right)|_{g\left(t\right)},\nonumber
\end{equation}
we see that
\begin{equation}
|\bar{R}ic+(n-1)\bar{g}|_{\bar{g}}\leq \epsilon_{19}e^{-\sigma_5t}e^{-\gamma\rho}.\nonumber
\end{equation}
Hence
\begin{equation}
|\bar{R}ic+(n-1)\bar{g}|^2_{\bar{g}}\leq \epsilon_{20}e^{-2\sigma_5t}e^{-2\gamma\rho}.\nonumber
\end{equation}
Therefore, under $NRF,$ it satisfies
\begin{equation}
\frac{\p}{\p t}\bar{S}\leq\Delta \bar{S}-2(n-1)\bar{S}+\epsilon_{20}e^{-2\sigma_5t}e^{-2\gamma\rho},~~~\text{}~~~t\in[1,\infty)\nonumber
\end{equation}
with $\bar{S}(\cdot,1)\leq2\epsilon_{18}e^{-\delta\rho}$.
Then by the same arguments as those in the proof of Lemma \ref{decayofV} and note that the constant $-2\left(n-1\right)$ before the zero order term $\bar{S}$ will make $B$ in ODE more positive hence will bring no trouble, we have
\begin{equation}\label{decayofSNRF}
|\bar{S}|\leq \epsilon_{21} e^{-\sigma_6 t} e^{-\delta \rho}.
\end{equation}

For any $x\in M$ and $s\in (0, t)$, let $\gamma(s)=\Phi_s\left(x\right)$ be an integral curve of $V$, together with Lemma \ref{decayofV},
we see that
\begin{equation}\label{mapdifference}
d_{\tilde {g}}(\Phi_t\left(x\right), x)\leq \int^t_0 |\dot \gamma\left(s\right)|ds\leq C_{18} \int^t_0 |V\left(x,s\right)|ds\leq \epsilon_{22}e^{-\delta \rho},
\end{equation}
where $d_{\tilde {g}}(\cdot,\cdot)$ denotes the distance function in $M$ with respect to metric $\tilde g$. Hence, we get
\begin{equation}
\begin{split}
|R\left(t\right)+n\left(n-1\right)|\left(x\right)&\leq |\bar{S}|\left(x\right)+|R\left(g\left(t\right)\right)-R\left(\bar g\left(t\right)\right)|\left(x\right)\\
&\leq |\bar{S}|\left(x\right)+|R\left(g\left(t\right)\right)(x)-R\left(g\left(t\right)\right)(\Phi_t\left(x\right))|\\
&\leq |\bar{S}|\left(x\right)+C_{19}\|\tilde{\nabla}R\|_{C^0\left(M^n,\tilde{g}\right)}\cdot d_{\tilde {g}}(\Phi_t\left(x\right), x)\\
&\leq \epsilon''e^{-\bar{\sigma}t}e^{-\delta \rho\left(x\right)},
\end{split}\nonumber
\end{equation}
where we used (\ref{decayofSNRF}), (\ref{bigtimeonederivative}) and (\ref{mapdifference}) for $t\in[1,\infty)$ in the above inequality.
\end{proof}

Now, we can show that
\begin{prop}\label{monotonicityvolume}
Let $g\left(t\right),$ $t\in[0,\infty)$ be a solution to the NRDF (\ref{RicciDeTurckflow}). Then we have
\begin{equation}
\mathcal{V}\left(g\left(t\right)\right)=\mathcal{V}\left(g\left(0\right)\right)-\int^t _0 \int_M \left(R\left(g\left(s\right)\right)+n\left(n-1\right)\right)d\mu_g ds,\nonumber
\end{equation}
where $d\mu_g $ is volume element with respect to metric $g(s)$. Moreover, $\mathcal{V}\left(g\left(t\right)\right)$ is non-increasing in $t$ and \begin{equation}\label{volumelimit}
\lim_{t\rightarrow \infty}\mathcal{V}\left(g\left(t\right)\right)=0.\nonumber
\end{equation}
\end{prop}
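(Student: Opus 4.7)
My plan is first to derive the identity, from which monotonicity is immediate, and then treat the limit separately by a spatial cut-off argument. Under NRDF the volume form evolves as
$$\frac{\partial}{\partial t}\,d\mu_g = \bigl(-(R+n(n-1)) + \operatorname{div}_g V\bigr)\,d\mu_g,$$
obtained from $\frac{\partial}{\partial t}\ln\sqrt{\det g} = \tfrac{1}{2}g^{ij}\frac{\partial}{\partial t}g_{ij}$ applied to equation (\ref{RicciDeTurckflow}). Integrating over a compact exhaustion $\Omega_i = \{\rho \leq i\}$ and applying the divergence theorem gives
$$\operatorname{vol}(\Omega_i, g(t)) - \operatorname{vol}(\Omega_i, g(0)) = -\int_0^t\!\int_{\Omega_i}(R+n(n-1))\,d\mu_g\,ds + \int_0^t\!\int_{\partial\Omega_i}\langle V,\nu\rangle\,dS_g\,ds.$$
By Lemma \ref{decayofV} one has $|V| \leq \epsilon' e^{-\tilde\sigma s}e^{-\delta\rho}$ with $\delta > n-1$, while the $\tilde g$-area of $\partial\Omega_i$ grows at most like $e^{(n-1)i}$ by conformal compactness, so the boundary flux vanishes as $i\to\infty$. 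Lemma \ref{decayofscalarcurvature} makes $(R+n(n-1))\,d\mu_g$ absolutely integrable in space--time, justifying passage to the limit. Subtracting the $t$-independent quantity $\operatorname{vol}(\Omega_i,\tilde g)$ yields the claimed identity, and $R+n(n-1)\geq 0$ (same lemma) gives monotonicity of $\mathcal{V}(g(t))$.

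For $\lim_{t\to\infty}\mathcal{V}(g(t)) = 0$ I fix a large radius $R$, set $B_R = \{\rho \leq R\}$, and decompose
$$\mathcal{V}(g(t)) = \int_{B_R}(d\mu_{g(t)}-d\mu_{\tilde g}) + \lim_{i\to\infty}\int_{\Omega_i\setminus B_R}(d\mu_{g(t)}-d\mu_{\tilde g}).$$
On the fixed compact set $B_R$, Theorem \ref{stability} gives $g(t)\to\tilde g$ exponentially in $C^0$, so the first piece tends to $0$ as $t\to\infty$. For the second, I insert $\pm\,d\mu_{g(0)}$: the summand $\lim_i\int_{\Omega_i\setminus B_R}(d\mu_{g(0)}-d\mu_{\tilde g})$ is the tail of an absolutely convergent integral (since $|g(0)-\tilde g| \leq \epsilon e^{-\tau\rho}$ with $\tau > n-1$), hence $o_R(1)$ uniformly in $t$; the summand $\lim_i\int_{\Omega_i\setminus B_R}(d\mu_{g(t)}-d\mu_{g(0)})$ is handled by the same volume-evolution identity applied on $\Omega_i\setminus B_R$, producing a space--time integral of $R+n(n-1)$ over $M\setminus B_R$ and a boundary flux of $V$ across $\partial B_R$; both are $o_R(1)$ uniformly in $t$ by the spatial decay rate $\delta > n-1$. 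Letting $t\to\infty$ first and then $R\to\infty$ forces $\lim_t\mathcal{V}(g(t)) = 0$.

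The principal obstacle is that Lemma \ref{decayingestimate}(2) supplies only spatial decay $e^{-\gamma\rho}$ with $\gamma < n-1$ for $t \geq 1$, so a direct dominated-convergence argument on $g(t)-\tilde g$ against $d\mu_{\tilde g}$ is unavailable for large $t$. The cut-off strategy sidesteps this by routing the spatial tail through the volume-evolution identity, where the relevant quantities $R+n(n-1)$ and $V$ do enjoy the sharper uniform-in-$t$ spatial decay rate $\delta > n-1$ coming from Lemmas \ref{decayofV} and \ref{decayofscalarcurvature}.
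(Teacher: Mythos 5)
Your proposal is correct and follows essentially the same route as the paper: the volume-form evolution under NRDF integrated over an exhaustion, with the boundary flux of $V$ and the integrand $R+n(n-1)$ killed by the spatial decay rate $\delta>n-1$ from Lemmas \ref{decayofV} and \ref{decayofscalarcurvature}, and then a compact-part/tail splitting for the limit in which the tail is routed back through the evolution identity exactly as in the paper's subtraction of (\ref{integralformulavolume1}) from (\ref{integralformulavolume}). Your closing observation that the $e^{-\gamma\rho}$ decay with $\gamma<n-1$ in Lemma \ref{decayingestimate}(2) forces this detour is precisely the point the paper's argument is built around.
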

\begin{proof}
Let $\Omega$ be any compact domain in $M$ with smooth boundary, then by a direct computation, under NRDF (\ref{RicciDeTurckflow}), we have

$$
\frac{d}{dt}\int_\Omega \left(\sqrt{|g|}-\sqrt{|\tilde g|}\right)dx=-\int_\Omega \left(R+n(n-1)\right)d\mu_g +\int_{\partial \Omega}\langle V,\nu\rangle_gd\sigma,
$$
where $\nu$ is the outward unit normal vector of $\partial \Omega$, hence, we obtain
\begin{equation}\label{integralformulavolume1}
\begin{split}
&\int_\Omega \left(\sqrt{|g(t)|}-\sqrt{|\tilde g|}\right)dx-\int_\Omega \left(\sqrt{|g(t_0)|}-\sqrt{|\tilde g|}\right)dx\\
=&-\int^t_{t_0} \int_\Omega \left(R\left(g(s)\right)+n(n-1)\right)d\mu_g ds +\int^t_{t_0}\int_{\partial \Omega}\langle V,\nu\rangle_gd\sigma ds\\
\end{split}
\end{equation}

Combine (\ref{integralformulavolume1}) with Lemma \ref{decayofV} and Lemma \ref{decayofscalarcurvature}, and let $\Omega$ exhaust the whole manifold $M$ we get
\begin{equation}\label{integralformulavolume}
\mathcal{V}\left(g\left(t\right)\right)=\mathcal{V}\left(g\left(t_0\right)\right)-\int^t _{t_0} \int_M \left(R\left(g(s)\right)+n\left(n-1\right)\right)d\mu_g ds.
\end{equation}

Let $t_0=1$ and $\Omega_{\rho_0}=\{x\in M:\rho\left(x\right)\leq\rho_0\}.$ For any small $\eta>0,$ on one hand, subtract (\ref{integralformulavolume1}) from (\ref{integralformulavolume}), we see that there is a large $\rho_0>0$ such that

\begin{equation}
\begin{split}
|\int_{M\setminus \Omega_{\rho_0}} \left(\sqrt{|g(t)|}-\sqrt{|\tilde g|}\right)dx|&\leq|\int_{M\setminus \Omega_{\rho_0}}\left(\sqrt{|g(1)|}-\sqrt{|\tilde g|}\right)dx| \\
&+|\int^t_{1} \int_{M\setminus \Omega_{\rho_0}}\left(R\left(g\left(s\right)\right)+n\left(n-1\right)\right)d\mu_g ds|\\
& +|\int^t_{1}\int_{\partial \Omega_{\rho_0}}\langle V,\nu\rangle_gd\sigma ds|\\
&\leq \frac\eta2
\end{split}\nonumber
\end{equation}
where we have already used Lemma \ref{decayingestimate}, Lemma \ref{decayofscalarcurvature} and Lemma \ref{decayofV} in the above inequality. On the other hand, there is a large $T_0\geq1$ which depends only on $\rho_0$ and $\eta$ so that for any $t\geq T_0$ we have
$$
|\int_{ \Omega_{\rho_0}}(\sqrt{|g(t)|}-\sqrt{|\tilde g|})dx|\leq \frac\eta2.
$$

Hence for any small $\eta>0$ there is a large $T_0$ which depends only on $\eta$ so that for any $t\geq T_0$

$$
|\mathcal{V}(g(t))|\leq \eta,
$$
which implies that
\begin{equation}
\lim_{t\rightarrow \infty}\mathcal{V}(g(t))=0.\nonumber
\end{equation}
Thus we finish to prove the proposition.
\end{proof}

Now, we can prove our main results.
\begin{proof}[Proof of Theorem \ref{main1}]
Consider NRDF (\ref{RicciDeTurckflow}) and NRF (\ref{Ricciflow}) starting from $g=g_0$, and let $g\left(t\right)$ and $\bar{g}\left(t\right)$ be the solution to (\ref{RicciDeTurckflow}) and (\ref{Ricciflow}) respectively. By Proposition \ref{monotonicityvolume}, we obtain

$$
\mathcal{V}(g)=\mathcal{V}(g_0)\geq \mathcal{V}(g(t))\geq0,
$$
that is, $$\mathcal{V}(g)\geq0.$$
\end{proof}

\begin{proof}[Proof of Theorem \ref{rigidity}]
If equality holds, we have $$
\mathcal{V}(g)= \mathcal{V}(g(t))=0,~~~\text{for}~~~~t\in[0,\infty),
$$
which implies
$$
\int_{M^n}(R(g(t))+n(n-1))d\mu_g = 0,
$$
together with the fact that $R(g(t))\geq -n(n-1)$, we get that on $M$ and for all $t\in[0,\infty)$
$$
R\left(g(t)\right)=-n(n-1),
$$
which means
$$
R\left(\bar{g}(t)\right)=-n(n-1).
$$
By the evolution equation of $R$ under NRF (\ref{Ricciflow}), we see that
$$
Ric\left(\bar{g}(t)\right)=-(n-1)\bar{g}(t)
$$
for all $t\in[0,\infty).$
Thus the initial metric $g$ is an Einstein metric, which means that the NRDF (\ref{RicciDeTurckflow}) is just acting by diffeomorphisms. According to the NRDF
\begin{equation}
\left\{
  \begin{array}{ll}
    \frac{\p}{\p t}g_{ij}=\nabla_{i}V_j+\nabla_{j}V_i\\
    g\left(\cdot,0\right)=g=g_0\\
    V_{j}=g_{jk}g^{pq}\left(\Gamma_{pq}^{k}-\tilde{\Gamma}_{pq}^{k}\right),
  \end{array}
\right.\nonumber
\end{equation}
and the NRF
\begin{equation}
\left\{
  \begin{array}{ll}
    \frac{\p}{\p t}\bar{g}_{ij}=0\\
    \bar{g}\left(\cdot,0\right)=g=g_0,\\
  \end{array}
\right.\nonumber
\end{equation}
we have
\begin{equation}\label{phiinfty}
g=g_0=\bar{g}(t)=\Phi^{\ast}_{t}g(t).\nonumber
\end{equation}

Since
\begin{equation}\label{diffeomorphism}
\left\{
  \begin{array}{ll}
    \frac{\p}{\p t}\Phi_t\left(x\right)=-V\left(\Phi_t\left(x\right),t\right),\\
    \Phi_0=id,
  \end{array}
\right.\nonumber
\end{equation}
and under the NRDF, for all $t\in[0,\infty),$ the $1$-form $V$ satisfies
\begin{equation}
|V\left(t\right)|\leq \epsilon' e^{-\tilde{\sigma} t} e^{-\delta\rho},\nonumber
\end{equation}
hence
\begin{equation}
|\Phi_t-id|\leq \frac{\epsilon'}{\tilde{\sigma}}\left(1-e^{-\tilde{\sigma} t}\right)e^{-\delta\rho}\nonumber
\end{equation}
which implies that there exists some smooth diffeomorphism $\Phi_{\infty}$ of $M^n$ satisfying $\Phi_{t}\rightarrow\Phi_{\infty}$ in $C^{\infty}\left(M^n, M^n\right)$ as $t\rightarrow\infty$ and $\Phi_{\infty}$ extends continuously to some diffeomorphism on $\bar{M}$ and $\Phi_{\infty}|_{\p M}=id.$ Therefore
\begin{equation}\label{phiinfty}
g=\Phi_{\infty}^{\ast}\tilde{g}\nonumber
\end{equation}
and thus we finish to prove Theorem \ref{rigidity}.
\end{proof}

\end{document}